\numberwithin{equation}{section}
\newtheorem{theorem}{Theorem}
\newtheorem{lemma}[theorem]{Lemma}
\newtheorem{corollary}[theorem]{Corollary}
\theoremstyle{definition}
\numberwithin{theorem}{section} 
\numberwithin{equation}{section}
\numberwithin{table}{section}
\newcommand{\Z}{\mathbb{Z}}
\newcommand{\N}{\mathbb{N}}
\newcommand{\re}{\textnormal{Re}}
\newcommand{\im}{\textnormal{Im}}
\renewcommand{\(}{\left(}
\begin{document}
\title[Higher power of odd zeta values]{Transformation formulas for the higher power of odd zeta values and generalized Eisenstein series}
\author{Soumyarup Banerjee}
\address{ Discipline of Mathematics, Indian Institute of Technology Gandhinagar, Palaj, Gandhinagar, Gujarat - 382355, India.}
\email{soumyarup.b@iitgn.ac.in}
\author{Vijay Sahani}
\address{Discipline of Mathematics, Indian Institute of Technology Gandhinagar, Palaj, Gandhinagar, Gujarat - 382355, India.}
\email{vijays@iitgn.ac.in}


\thanks{2020 \textit{Mathematics Subject Classification.} Primary 11M06; Secondary 11M36, 11F20.\\
		\textit{Keywords and phrases.} Odd zeta values, Meijer G-function, Ramanujan's identity, Eisenstein series, Dedekind eta function}

\medskip
\begin{abstract}
In this article, we obtain a transformation formula for the higher power of odd zeta values, which generalizes Ramanujan's formula for odd zeta values. We have also investigated many important applications, which in turn provide generalizations of the transformation formula of the Eisenstein series, Dedekind eta function etc. 
\end{abstract}

\maketitle
\section{Introduction}
The Riemann zeta function $\zeta(s)$ is a central object in analytic number theory to study the distribution of primes and has applications in physics, probability theory, and applied statistics. Over the years, the special values of $\zeta(s)$ and its arithmetic behaviour have drawn the attention of many mathematicians. The story begins with a remarkable discovery of Euler in 1734 about determining the special values of $\zeta(s)$ at positive even integral arguments. Euler's formula states that for any natural number $m$,
\begin{equation}
\zeta(2m) = (-1)^{m+1} \frac{(2\pi)^{2m} B_{2m}}{2(2m)!}. \label{Euler's formula}
\end{equation}
where $B_m$ denotes the $m$-th Bernoulli number. The above formula readily implies that all even zeta values $\zeta(2m)$ with $m \in \N$, are transcendental due to the well-known fact that $\pi$ is transcendental and $B_m$ is rational.

The arithmetic nature of zeta values at odd integral arguments is still far from being known. The irrationality of $\zeta(3)$ was established by Ap{\'e}ry \cite{Apery1}, \cite{Apery2}. Recently, Zudilin \cite{Zudilin} has shown that at least one of the four numbers $\zeta(5)$, $\zeta(7)$, $\zeta(9)$, $\zeta(11)$ is irrational. 

In this context, Ramanujan \cite[p.~173, Ch.~14, Entry~21(i)]{Ramanujan} obtained an elegant identity for odd zeta values, which precisely states that for any non-zero integer $m$ and $\alpha, \beta >0$ with $\alpha \beta = \pi^2$, 
\begin{align}
\alpha^{-m}\left\lbrace \frac{1}{2}\zeta(2m+1) + \sum_{n=1}^\infty\frac{n^{-2m-1}}{e^{2n\alpha} - 1} \right\rbrace
= (-\beta)^{-m}\left\lbrace \frac{1}{2}\zeta(2m+1) + \sum_{n=1}^\infty\frac{n^{-2m-1}}{e^{2n\beta} - 1} \right\rbrace \nonumber\\
-2^{2m} \sum_{j=0}^{m+1} \frac{(-1)^j B_{2j} B_{2m+2-2j}}{(2j)!(2m+2-2j)!}\alpha^{m+1-j}\beta^j.\label{Ramanujan formula}
\end{align}

As an immediate application of the above identity, one can consider $\alpha = \beta = \pi$ and $m$ to be odd in \eqref{Ramanujan formula} to obtain Lerch's identity \cite{Lerch}, given by,
\begin{equation}
\zeta(2m+1) + 2\sum_{n=1}^\infty \frac{1}{n^{2m+1} (e^{2\pi n}-1)} = 2^{2m}\pi^{2m+1} \sum_{j=0}^{m+1} \frac{(-1)^{j+1} B_{2j} B_{2m+2-2j}}{(2j)!(2m+2-2j)!}.\label{Lerch}
\end{equation}
Thus it follows from the above identity that for $m$ odd, at least one of $\zeta(2m+1)$ and $\sum_{n=1}^\infty \frac{1}{n^{2m+1} (e^{2\pi n}-1)}$ is transcendental.

In \eqref{Ramanujan formula}, Ramanujan expressed $\zeta(2m+1)$ in terms of the Lambert series
\begin{equation}\label{Lambert series in Ramanujan formula}
\sum_{n=1}^\infty \frac{n^a}{e^{ny}-1} = \sum_{n=1}^\infty \sigma_a(n) e^{-ny}
\end{equation} 
with $a = -2m-1$ and $\re(y)>0$. Here $\sigma_a(n)$ denotes the general divisor function defined by $\sigma_a(n) := \sum_{d\mid n} d^a$. For $a = 2m-1$ with $m\in \N$ and $y = -2\pi iz$ with $z \in \mathbb{H}$, the upper half-plane, either of the above series essentially represents the Eisenstein series $E_{2m}$ of weight $2m$ on the full modular group $SL_2 (\Z)$. The transformation formula satisfied by $E_{2m}$ with $m>1$ over $SL_2(\Z)$ under the transformation $z \mapsto -\frac{1}{z}$ are namely, for $\alpha, \beta>0$ with $\alpha\beta=\pi^2$,
\begin{equation}\label{Eisenstein series}
\alpha^m \sum_{n=1}^\infty \frac{n^{2m-1}}{e^{2\alpha n}-1}- (-\beta)^m \sum_{n=1}^\infty \frac{n^{2m-1}}{e^{2\beta n}-1} = (\alpha^m- (-\beta)^m)\frac{B_{2m}}{4m}.
\end{equation}
The series $\sum_{n=1}^\infty \sigma_1(n)e^{2\pi inz}$ is essentially the quasi-modular form $E_2(z)$, that is, the Eisenstein series of weight 2 over $SL_2(\Z)$. The transformation formula of $E_2(z)$ under the transformation $z \mapsto -\frac{1}{z}$ are namely, for $\alpha, \beta>0$ with $\alpha\beta=\pi^2$,
\begin{equation}\label{Quasi modular form}
\alpha \sum_{n=1}^\infty \frac{n}{e^{2\alpha n}-1} + \beta \sum_{n=1}^\infty \frac{n}{e^{2\beta n}-1} = \frac{\alpha+\beta}{24}-\frac{1}{4}.
\end{equation}
When $a = -2m+1$, the above series represents the Eichler integral corresponding to the weight $2m$ Eisenstein series (cf. \cite[Section 5]{Berdnt}). Thus, Ramanujan's identity \eqref{Ramanujan formula} encapsulates fundamental modular properties of even integral weight Eisenstein series over the full modular group and their Eichler integrals. The identity \eqref{Ramanujan formula} also has an application in theoretical computer science \cite{Kirschenhofer}, in particular, in the analysis of special data structures and algorithms.

Malurkar \cite{Malurkar} first independently obtained the proof of \eqref{Ramanujan formula}. Later, the formula was rediscovered by Grosswald \cite{Grosswald1} \cite{Grosswald2}, where he studied it more generally. Both Euler's formula \eqref{Euler's formula} and Ramanujan's formula \eqref{Ramanujan formula} follow as a special case of Berdnt's general transformation formula \cite[Theorem 2.2]{Berdnt0}, which in turn shows that Euler's and Ramanujan's formulas are natural companions of each other. Recently, O'Sullivan \cite[Theorem 1.3]{Sullivan} studied non-holomorphic analogues of the formulas of Ramanujan, Grosswald, and Berndt containing Eichler integrals of holomorphic Eisenstein series.

Let $d(n) := \sum_{d\mid n}1$ be the divisor function. The Dirichet series associated to the divisor function $d(n)$ is precisely $\sum_{n=1}^\infty d(n)n^{-s} = \zeta^2(s)$, which was studied by numerous mathematicians in different directions (cf. \cite{Ferrar}, \cite{Oberhettinger}, \cite{Voronoi}) from the point of view of analytic number theory. 

The formula for $\zeta^2(2m)$ can be obtained by squaring both sides of \eqref{Euler's formula}, but the resulting formula for $\zeta^2(2m+1)$ after squaring both  sides of \eqref{Ramanujan formula} is extremely intricate and could not be simplified further. Recently, Dixit and Gupta \cite[Theorem 2,1]{Dixit} established a transformation formula for $\zeta^2(2m+1)$, $m \in \Z \setminus \{0\}$, which can be considered as an analogue of Ramanujan's identity \eqref{Ramanujan formula}.

Koshliakov \cite{Koshliakov} studied a function, namely,
\begin{equation}\label{Omega}
\Omega(x) := 2\sum_{j=1}^\infty d(j) \left(K_0 \left(4\pi \epsilon \sqrt{jx}\right) + K_0 \left(4\pi \overline{\epsilon} \sqrt{jx}\right)\right),
\end{equation}
where $\epsilon = \exp\left(\frac{i \pi}{4}\right)$ and $K_z(x)$ denotes the modified Bessel function of the second kind of order $z$ (cf. \cite[p.~78]{Watson}), which is defined later in \S \ref{Preliminaries}. In the same article, Koshliakov established two beautiful identities \cite[Equations (27), (29)]{Koshliakov}, which derives an infinite series involving $\Omega(x)$ as follows :
\begin{equation*}
\sum_{n=1}^\infty n^{4m+1} d(n) \Omega(n) = \frac{B^2_{4m+2}}{(4m+2)^2}\left[\log(2\pi) - \sum_{k=1}^{4m+1}\frac{1}{k}- \frac{\zeta'(4m+2)}{\zeta(4m+2)} \right]
\end{equation*}
and
\begin{equation*}
\sum_{n=1}^\infty n d(n) \Omega(n) = \frac{1}{144}\left[\log(2\pi) - 1 - \frac{6}{\pi^2}\zeta'(2) \right] - \frac{1}{32\pi}.
\end{equation*} 

In \cite{Dixit}, Dixit and Gupta generalized the function $\Omega(x)$ in \eqref{Omega} by introducing a new parameter $\rho$ as
\begin{equation}\label{Generalized Koshliakov's function}
\Omega_\rho(x) := 2\sum_{j=1}^\infty d(j) \left(K_0 \left(4\rho \epsilon \sqrt{jx}\right) + K_0 \left(4\rho \overline{\epsilon} \sqrt{jx}\right)\right)
\end{equation}
such that $\Omega_\pi(x) = \Omega(x)$ and obtained the transformation formula for $\zeta^2(2m+1)$, which precisely states that for any non-zero integer $m$ and $\alpha, \beta >0$ with $\alpha \beta = \pi^2$, 
\begin{multline}
\alpha^{-2m} \left[\zeta^2(2m+1)\left(\gamma+\log\left(\frac{\alpha}{\pi} \right)- \frac{\zeta'(2m+1)}{\zeta(2m+1)}\right) + \sum_{n=1}^\infty \frac{d(n)\Omega_\alpha(n)}{n^{2m+1}} \right]\\
= (-1)^{-m}\beta^{-2m} \left[\zeta^2(2m+1)\left(\gamma+\log\left(\frac{\beta}{\pi} \right)- \frac{\zeta'(2m+1)}{\zeta(2m+1)}\right) + \sum_{n=1}^\infty \frac{d(n)\Omega_\beta(n)}{n^{2m+1}} \right]\\
- 2^{4m} \pi \sum_{j=0}^{m+1} \frac{(-1)^j B_{2j}^2 B_{2m+2-2j}^2}{(2j)!^2 (2m+2-2j)!^2} \alpha^{2j} \beta^{2m+2-2j}. \label{Dixit formula}
\end{multline}

Let $d_k(n)$ denotes the number of ways $n$ can be written as $k$ given factors, which is sometimes known as Piltz divisor function. We define an infinite series $\Psi_{\rho, k}(x)$ involving the Meijer G-function (cf. \S \ref{sec:specialfunctions}). This series includes the Lambert series \eqref{Lambert series in Ramanujan formula} that appeared in Ramanujan's formula \eqref{Ramanujan formula}, and the generalized Koshliakov's function \eqref{Generalized Koshliakov's function} appeared in the identity \eqref{Dixit formula} of Dixit and Gupta as a special case. For $\rho>0$, let $\Psi_{\rho, k}(x)$ be defined as 
\begin{equation}\label{Psiomegakx}
\Psi_{\rho, k}(x) :=\frac{\pi^{k/2-1}}{2^{k-1}} \sum_{j=1}^{\infty}d_{k}(j) 
G_{0,\ 2k}^{k+1,\ 0}\bigg(
\begin{matrix}
\text{---}\\
(0)_k,\frac{1}{2}:\left(\frac{1}{2}\right)_{k-1}
\end{matrix} \ \bigg| \
\frac{\rho^2 j^2 x^2}{2^{2k}}
\bigg),
\end{equation}
where $k$ is any natural number. In this article, we have generalized Ramanujan's identity and the identity of Dixit and Gupta by studying  the transformation formula for $\zeta^k(2m+1)$ with $m\in \Z\setminus \{0\}$, where $k$ is any natural number.
\begin{theorem}\label{Main result}
For any non-zero integer $m$ and for any $\alpha, \beta >0$ satisfying $\alpha\beta=\pi^{2}$, we have
\begin{align}
&(\alpha^k)^{-m}\left[\sum_{n=1}^{\infty}\frac{d_k(n)}{n^{2m+1}}\Psi_{(2\alpha)^k, k}(n)-\frac{1}{(k-1)!}\frac{d^{k-1}}{ds^{k-1}}\left( \zeta^k(2m+1+s)\zeta^k(s)\Gamma^k(s+1)\cos^{k-1}\left(\frac{\pi s}{2}\right)(2\alpha)^{-ks}\right)\bigg\rvert_{s=0}\right]\nonumber\\
&=(-\beta^k)^{-m}\left[
\sum_{n=1}^{\infty}\frac{d_k(n)}{n^{2m+1}}\Psi_{(2\beta)^k, k}(n)-\frac{1} {(k-1)!}\frac{d^{k-1}}{ds^{k-1}}\left(\zeta^k(2m+1+s)\zeta^{k}(s)\Gamma^{k}(s+1)\cos^{k-1}\left(\frac{\pi s}{2}\right)(2\beta )^{-ks}\right)\bigg\rvert_{s=0}\right]\nonumber\\
&+(-1)^{km+k+m}\left(\frac{\pi}{2}\right)^{k-1} 2^{2km}\sum_{j=0}^{m+1}(-1)^{j}\frac{B^k_{2m-2j+2}B^k_{2j}}{(2m-2j+2)!^k(2j)!^k} \alpha^{k(m+1-j)}\beta^{kj}, \label{General Ramanujan identity}
\end{align}
where $k$ is any natural number.
\end{theorem}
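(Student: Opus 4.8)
The plan is to prove \eqref{General Ramanujan identity} by a Mellin--Barnes contour argument in the spirit of the classical proofs of \eqref{Ramanujan formula} and \eqref{Dixit formula}. First I would record a contour representation for $\Psi_{\rho,k}$: computing the Mellin transform of $\Psi_{\rho,k}$ term by term from the Mellin transform of the Meijer $G$-function in \eqref{Psiomegakx}, and simplifying with the Legendre duplication formula, the reflection formula for $\Gamma$, and $\sum_{j\ge1}d_k(j)j^{-2s}=\zeta^k(2s)$, one finds that this transform equals $\Gamma^k(w)\cos^{k-1}(\pi w/2)\zeta^k(w)\rho^{-w}$ for $\re(w)>1$, so that
\begin{equation*}
\Psi_{\rho,k}(x)=\frac1{2\pi i}\int_{(c)}\Gamma^k(w)\cos^{k-1}\!\Big(\tfrac{\pi w}{2}\Big)\zeta^k(w)(\rho x)^{-w}\,dw\qquad(c>1)
\end{equation*}
(for $k=1$ this is just the Mellin transform of $(e^{\rho x}-1)^{-1}$). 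Taking $\rho=(2\alpha)^k$, $x=n$, multiplying by $d_k(n)/n^{2m+1}$ and summing over $n$ (the interchange being legitimate when $c$ is large, where moreover $\sum_{n\ge1}d_k(n)n^{-2m-1-w}=\zeta^k(2m+1+w)$) I would obtain
\begin{equation*}
\sum_{n\ge1}\frac{d_k(n)}{n^{2m+1}}\Psi_{(2\alpha)^k,k}(n)=\frac1{2\pi i}\int_{(c)}F_\alpha(w)\,dw,\qquad F_\alpha(w):=\Gamma^k(w)\cos^{k-1}\!\Big(\tfrac{\pi w}{2}\Big)\zeta^k(w)\zeta^k(2m+1+w)(2\alpha)^{-kw}.
\end{equation*}

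The next step would be to symmetrize $F_\alpha$. Applying the functional equation of $\zeta$ raised to the $k$-th power, $\Gamma(w)\Gamma(1-w)=\pi/\sin\pi w$, and $\sin\pi w=2\sin\tfrac{\pi w}2\cos\tfrac{\pi w}2$, the gamma factors telescope and one gets $F_\alpha(w)=2^{-k}(\pi/\alpha)^{kw}\zeta^k(1-w)\zeta^k(2m+1+w)/\cos(\pi w/2)$, whence, using $\alpha\beta=\pi^2$, the crucial reflection identity $F_\alpha(-2m-w)=(-1)^m(\alpha/\beta)^{km}F_\beta(w)$. Since $F_\alpha$ decays exponentially in $|\im(w)|$ uniformly in vertical strips, I would then move the contour leftward from $\re(w)=c$ to $\re(w)<-2m-1$. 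The poles encountered are: an order-$k$ pole at $w=0$; an order-$k$ pole at $w=-2m$; and simple poles (from the factor $1/\cos(\pi w/2)$) at the odd integers $w=1-2j'$, $j'=0,1,\dots,m+1$. The even integers strictly between $-2m$ and $0$ are regular points, since there the order-$k$ trivial zeros of $\zeta^k$ cancel the order-$k$ poles of $\Gamma^k$. As $w^k\Gamma^k(w)=\Gamma^k(w+1)$, the residue at $w=0$ is exactly the explicit term
\begin{equation*}
R_\alpha:=\frac1{(k-1)!}\frac{d^{k-1}}{ds^{k-1}}\Big(\zeta^k(2m+1+s)\zeta^k(s)\Gamma^k(s+1)\cos^{k-1}\!\big(\tfrac{\pi s}2\big)(2\alpha)^{-ks}\Big)\Big|_{s=0}
\end{equation*}
of the theorem; the reflection identity gives $\Res_{w=-2m}F_\alpha=-(-1)^m(\alpha/\beta)^{km}R_\beta$; and the substitution $w\mapsto-2m-w$ in the shifted integral, together with the reflection identity again, turns it into $(-1)^m(\alpha/\beta)^{km}\sum_{n\ge1}d_k(n)n^{-2m-1}\Psi_{(2\beta)^k,k}(n)$.

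Finally I would evaluate the residues at the odd integers. From the symmetric form of $F_\alpha$ one has $\Res_{w=1-2j'}F_\alpha=\frac{-2(-1)^{j'}}{2^k\pi}(\pi/\alpha)^{k(1-2j')}\zeta^k(2j')\zeta^k(2m+2-2j')$; inserting Euler's formula \eqref{Euler's formula} for $\zeta^k(2j')$ and $\zeta^k(2m+2-2j')$, collapsing the powers of $2$ and $\pi$ by means of $\pi^2=\alpha\beta$, and re-indexing by $j=m+1-j'$, the sum $\sum_{j'=0}^{m+1}\Res_{w=1-2j'}F_\alpha$ works out to be $\alpha^{km}$ times precisely the finite double Bernoulli sum on the right of \eqref{General Ramanujan identity} (sign and constant $(-1)^{km+k+m}(\pi/2)^{k-1}2^{2km}$ included). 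Assembling the residues with the shifted integral gives an identity which, after multiplying through by $(\alpha^k)^{-m}=\alpha^{-km}$ and using $(-1)^m\beta^{-km}=(-\beta^k)^{-m}$, is exactly \eqref{General Ramanujan identity} for $m>0$; for $m<0$ the same scheme applies, the pole configuration merely being somewhat different (e.g.\ for $m\le-2$ no odd pole survives, in accordance with the finite sum then being empty). I expect the main obstacle to be the bookkeeping of signs and normalizations in this last step (and, subsidiarily, pinning down the constants in the Mellin transform of the $G$-function); once the reflection identity $F_\alpha(-2m-w)=(-1)^m(\alpha/\beta)^{km}F_\beta(w)$ is in hand, the remainder is routine residue calculus.
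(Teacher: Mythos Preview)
Your proposal is correct and follows essentially the same approach as the paper: establish the Mellin--Barnes representation $\Psi_{\rho,k}(x)=\frac{1}{2\pi i}\int_{(c)}\Gamma^k(w)\cos^{k-1}(\pi w/2)\zeta^k(w)(\rho x)^{-w}\,dw$ (this is Lemma~\ref{Lem:Inverse Mellin}), sum against $d_k(n)n^{-2m-1}$, rewrite via the functional equation as an integral of $\zeta^k(1-w)\zeta^k(2m+1+w)/\cos(\pi w/2)$ against a power, shift the contour to $-2m-3<\re(w)<-2m-1$, and identify the residues at $w=0$, $w=-2m$, and $w=1-2j'$ ($0\le j'\le m+1$) together with the shifted integral as the $\beta$-side. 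Your explicit formulation of the symmetry as the reflection identity $F_\alpha(-2m-w)=(-1)^m(\alpha/\beta)^{km}F_\beta(w)$ is a clean organizing device, but it is exactly what the paper does in equations \eqref{R-2m} and \eqref{Vertical integral} after the substitution $s\mapsto -s-2m$; there is no substantive difference in method.
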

As an immediate application of the above theorem, we obtain the following two results.
\begin{corollary}\label{Ramanujan theorem}
Ramanujan's identity \eqref{Ramanujan formula} for odd zeta values is valid.
\end{corollary}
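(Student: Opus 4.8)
The plan is to obtain Corollary~\ref{Ramanujan theorem} by specializing Theorem~\ref{Main result} to $k=1$ and checking that every ingredient of \eqref{General Ramanujan identity} collapses onto the corresponding term of \eqref{Ramanujan formula}. The first and only delicate step is to identify the series $\Psi_{\rho,1}$. For $k=1$ the prefactor in \eqref{Psiomegakx} is $\pi^{-1/2}$, the Piltz divisor function is $d_1(j)\equiv 1$, and the Meijer $G$-function appearing there is $G_{0,2}^{2,0}\!\left(\begin{smallmatrix}\text{---}\\ 0,\,1/2\end{smallmatrix}\,\big|\,z\right)$. Using the standard reduction of a $G_{0,2}^{2,0}$ to a modified Bessel function, namely $G_{0,2}^{2,0}\!\left(\begin{smallmatrix}\text{---}\\ a,\,b\end{smallmatrix}\,\big|\,z\right)=2z^{(a+b)/2}K_{a-b}\!\left(2\sqrt z\right)$, together with $K_{1/2}(w)=\sqrt{\pi/(2w)}\,e^{-w}$, one gets $G_{0,2}^{2,0}\!\left(\begin{smallmatrix}\text{---}\\ 0,\,1/2\end{smallmatrix}\,\big|\,z\right)=\sqrt{\pi}\,e^{-2\sqrt z}$. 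Since the argument in \eqref{Psiomegakx} is $\rho^2 j^2 x^2/2^{2}$ when $k=1$, this yields $\Psi_{\rho,1}(x)=\sum_{j=1}^\infty e^{-\rho j x}=(e^{\rho x}-1)^{-1}$ for $\rho,x>0$. Hence, with $\rho=2\alpha$, $\sum_{n\ge 1} d_1(n)\,n^{-2m-1}\Psi_{(2\alpha)^1,1}(n)=\sum_{n\ge 1} n^{-2m-1}(e^{2\alpha n}-1)^{-1}$, which is precisely the Lambert series in \eqref{Lambert series in Ramanujan formula} occurring in Ramanujan's identity.

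Next I would dispose of the derivative term. When $k=1$ the operator $\frac{1}{(k-1)!}\frac{d^{k-1}}{ds^{k-1}}\big|_{s=0}$ is mere evaluation at $s=0$, while $\cos^{k-1}(\pi s/2)=1$, $\Gamma^{k}(s+1)\big|_{s=0}=1$ and $(2\alpha)^{-ks}\big|_{s=0}=1$; since $\zeta(0)=-\tfrac12$, the term equals $\zeta(2m+1)\zeta(0)=-\tfrac12\zeta(2m+1)$. Consequently the left-hand bracket of \eqref{General Ramanujan identity} becomes $\tfrac12\zeta(2m+1)+\sum_{n\ge 1} n^{-2m-1}(e^{2\alpha n}-1)^{-1}$, and, since $(\alpha^{1})^{-m}=\alpha^{-m}$, the whole left-hand side coincides with the left-hand side of \eqref{Ramanujan formula}. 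The first summand on the right of \eqref{General Ramanujan identity} is handled identically with $\alpha$ replaced by $\beta$ and $(-\beta^{1})^{-m}=(-\beta)^{-m}$, giving the corresponding term in \eqref{Ramanujan formula}.

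It remains to match the polynomial part. Setting $k=1$ in the last line of \eqref{General Ramanujan identity} gives $(-1)^{km+k+m}=(-1)^{2m+1}=-1$, $(\pi/2)^{k-1}=1$, $2^{2km}=2^{2m}$, $B_{2m-2j+2}^{k}B_{2j}^{k}/((2m-2j+2)!^{k}(2j)!^{k})=B_{2m+2-2j}B_{2j}/((2m+2-2j)!(2j)!)$, and $\alpha^{k(m+1-j)}\beta^{kj}=\alpha^{m+1-j}\beta^{j}$, so that this line reduces to $-2^{2m}\sum_{j=0}^{m+1}(-1)^j\, B_{2j}B_{2m+2-2j}\,\alpha^{m+1-j}\beta^{j}/\bigl((2j)!(2m+2-2j)!\bigr)$, which is exactly the Bernoulli sum of \eqref{Ramanujan formula}. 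Putting the three pieces together reproduces \eqref{Ramanujan formula} verbatim. The main obstacle, as indicated, is the first step; once the $G$-function is recognized as the geometric series $(e^{\rho x}-1)^{-1}$ everything else is bookkeeping of elementary constants, and the range $0\le j\le m+1$ (which is empty for $m\le -2$) as well as the interpretation for negative $m$ is inherited directly from Theorem~\ref{Main result}, so no separate treatment of the case $m<0$ is required.
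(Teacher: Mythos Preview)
Your proof is correct and follows essentially the same approach as the paper: both identify $\Psi_{\rho,1}(x)$ via the reduction $G_{0,2}^{2,0}\!\left(\begin{smallmatrix}\text{---}\\0,\,1/2\end{smallmatrix}\,\big|\,z\right)=2z^{1/4}K_{1/2}(2\sqrt z)$ together with $K_{1/2}(w)=\sqrt{\pi/(2w)}\,e^{-w}$, and then specialize \eqref{General Ramanujan identity} at $k=1$. The paper simply declares the conclusion after obtaining $\Psi_{\rho,1}(x)=\sum_{j\ge1}e^{-\rho jx}$, whereas you carry out the remaining bookkeeping (evaluation of the derivative term at $s=0$, matching the Bernoulli sum) explicitly; this is welcome extra detail but not a different argument.
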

\begin{corollary}\label{Dixit theorem}
The identity \eqref{Dixit formula} of Dixit and Gupta is valid.
\end{corollary}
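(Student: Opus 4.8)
The strategy is to express $\sum_{n}d_k(n)n^{-2m-1}\Psi_{(2\alpha)^k,k}(n)$ as a single Mellin--Barnes integral and to recover every term of \eqref{General Ramanujan identity} from the contour shift of that integral together with the functional equation of $\zeta$. First I would unwind the Meijer $G$-function in \eqref{Psiomegakx}: written as a Barnes integral its kernel is the Gamma ratio $\Gamma(-s)^{k}\Gamma(\tfrac12-s)\Gamma(\tfrac12+s)^{-(k-1)}$, and the reflection formula $\Gamma(\tfrac12-s)\Gamma(\tfrac12+s)=\frac{\pi}{\cos(\pi s)}$ together with the Legendre duplication formula $\Gamma(-s)\Gamma(\tfrac12-s)=2^{2s+1}\sqrt{\pi}\,\Gamma(-2s)$ collapses it, after the change of variable $s\mapsto s/2$, to $\Gamma(-s)^{k}\cos^{k-1}(\tfrac{\pi s}{2})$ up to elementary factors; summing over $j$ via $\sum_{j\ge1}d_k(j)j^{-u}=\zeta^k(u)$ and reflecting $s\mapsto-s$ then gives, for $c>1$,
\begin{equation*}
\Psi_{\rho,k}(x)=\frac{1}{2\pi i}\int_{(c)}\zeta^k(u)\Gamma^k(u)\cos^{k-1}\!\Big(\frac{\pi u}{2}\Big)(\rho x)^{-u}\,du .
\end{equation*}
Multiplying by $d_k(n)n^{-2m-1}$ and summing over $n$ (the interchange being legitimate once $c>\max(1,-2m)$, using $\sum_n d_k(n)n^{-2m-1-u}=\zeta^k(2m+1+u)$) yields
\begin{equation*}
I(\alpha):=\sum_{n=1}^{\infty}\frac{d_k(n)}{n^{2m+1}}\Psi_{(2\alpha)^k,k}(n)=\frac{1}{2\pi i}\int_{(c)}\Phi_\alpha(u)\,du,\qquad\Phi_\alpha(u):=\zeta^k(2m+1+u)\zeta^k(u)\Gamma^k(u)\cos^{k-1}\!\Big(\frac{\pi u}{2}\Big)(2\alpha)^{-ku}.
\end{equation*}
Since $\Gamma(u)=\Gamma(u+1)/u$, the second term on the left of \eqref{General Ramanujan identity} is exactly $\Res_{u=0}\Phi_\alpha(u)$.

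Next I would record the reflection identity
\begin{equation*}
\Phi_\alpha(-2m-u)=(-1)^{m}\Big(\frac{\alpha}{\beta}\Big)^{km}\Phi_\beta(u),
\end{equation*}
which follows from applying the asymmetric functional equation in the form $\zeta(u)\Gamma(u)\cos(\tfrac{\pi u}{2})=\tfrac12(2\pi)^u\zeta(1-u)$ ($k$ times), together with $\cos\!\big(\tfrac{\pi}{2}(2m+u)\big)=(-1)^m\cos(\tfrac{\pi u}{2})$ and $\alpha\beta=\pi^2$. Then I would shift the contour defining $I(\alpha)$ from $\re(u)=c$ to $\re(u)=-c'$ with $c'$ large; the horizontal portions vanish because $\Gamma^k(u)$ decays like $e^{-k\pi|\im u|/2}$ while $\cos^{k-1}$ grows only like $e^{(k-1)\pi|\im u|/2}$ and $\zeta^k$ is of polynomial growth. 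In the remaining integral over $(-c')$ the substitution $u=-2m-v$ and the reflection identity give $\tfrac{1}{2\pi i}\int_{(-c')}\Phi_\alpha=(-1)^m(\alpha/\beta)^{km}I(\beta)$ (valid since $c'-2m$ again lies to the right of all poles of $\Phi_\beta$), so
\begin{equation*}
I(\alpha)=(-1)^m\Big(\frac{\alpha}{\beta}\Big)^{km}I(\beta)+\sum_{-c'<\re(u)<c}\Res_{u}\Phi_\alpha(u).
\end{equation*}
Multiplying by $(\alpha^k)^{-m}$ and using $(\alpha^k)^{-m}(-1)^m(\alpha/\beta)^{km}=(-\beta^k)^{-m}$, the left side of \eqref{General Ramanujan identity} becomes $(-\beta^k)^{-m}I(\beta)+(\alpha^k)^{-m}\big(\sum_{u}\Res_u\Phi_\alpha-\Res_{u=0}\Phi_\alpha\big)$.

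It remains to evaluate the residues. The pole at $u=0$ cancels the subtracted term. At each negative even integer $\zeta^k(u)$ has a zero of order $k$ cancelling the order-$k$ pole of $\Gamma^k(u)$, except at $u=-2m$ (when $m>0$), where that pole meets the order-$k$ pole of $\zeta^k(2m+1+u)$; applying the reflection identity near $u=-2m$ gives $\Res_{u=-2m}\Phi_\alpha=(-1)^{m+1}(\alpha/\beta)^{km}\Res_{u=0}\Phi_\beta$, and $(\alpha^k)^{-m}$ times this is precisely the term $-(-\beta^k)^{-m}\Res_{u=0}\Phi_\beta$ appearing on the right of \eqref{General Ramanujan identity}. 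At each negative odd integer the zero of order $k-1$ of $\cos^{k-1}(\tfrac{\pi u}{2})$ reduces the $\Gamma^k$-pole to a simple one, and $\zeta^k(2m+1+u)$ vanishes there once $u<-2m-1$, so the only surviving contributions are simple poles at $u=1,-1,-3,\dots,-(2m+1)$ for $m>0$ (and at most $u=1$ for $m<0$). Computing these simple residues with Euler's formula \eqref{Euler's formula} applied to $\zeta(2m+2)$ and $\zeta(2(m+1-\ell))$ and with $\zeta(1-2\ell)=-B_{2\ell}/(2\ell)$, and reducing powers of $2,\pi$ by $\alpha\beta=\pi^2$, shows that $u=1$ produces the $j=m+1$ term and $u=-(2\ell-1)$ the $j=m+1-\ell$ term of the finite sum in \eqref{General Ramanujan identity}, with prefactor exactly $(-1)^{km+k+m}(\pi/2)^{k-1}2^{2km}$. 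Corollaries \ref{Ramanujan theorem} and \ref{Dixit theorem} then follow by specialization, taking $k=1$ (where $\Psi_{2\alpha,1}(n)=(e^{2\alpha n}-1)^{-1}$ and $\Res_{u=0}\Phi_\alpha=-\tfrac12\zeta(2m+1)$) and $k=2$ (where $\Res_{u=0}\Phi_\alpha$ unfolds to $\zeta^2(2m+1)\big(\gamma+\log(\alpha/\pi)-\zeta'(2m+1)/\zeta(2m+1)\big)$ and $\Psi_{(2\alpha)^2,2}$ matches Dixit--Gupta's $\Omega_\alpha$).

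The step I expect to be most delicate is this last one: one must control precisely how the order-$k$ pole of $\Gamma^k(u)$, the order-$(k-1)$ zero of $\cos^{k-1}(\tfrac{\pi u}{2})$, and the order-$k$ zeros of $\zeta^k(u)$ interact at the negative integers, so that exactly the intended simple poles remain, and then match the resulting residues --- signs included --- to the coefficients $(-1)^{j}B_{2m-2j+2}^{k}B_{2j}^{k}/\big((2m-2j+2)!^{k}(2j)!^{k}\big)$ uniformly in the sign of $m$ (the Bernoulli sum being empty for $m\le-2$, consistently with $\zeta(2m+2)=0$). Justifying the interchange of summation and integration and the vanishing of the horizontal integrals, while routine, also requires care.
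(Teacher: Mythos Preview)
Your Mellin--Barnes/contour-shift argument is precisely the paper's proof of Theorem \ref{Main result} (Lemma \ref{Lem:Inverse Mellin} followed by the residue calculation in \S\ref{Generalization of Ramanujan's identity and it's special cases}), the only cosmetic difference being that the paper first applies \eqref{zeta functional equation} to rewrite the integrand as $\zeta^k(2m+1+s)\zeta^k(1-s)/\cos(\tfrac{\pi s}{2})$, so the odd-integer poles arise from a denominator cosine rather than from the surplus of $\Gamma^k$-poles over $\cos^{k-1}$-zeros. For Corollary \ref{Dixit theorem} itself, however, the paper does not rerun any of this machinery: with Theorem \ref{Main result} already in hand it simply sets $k=2$, invokes \eqref{G and Ker} to identify $\Psi_{(2\alpha)^2,2}(n)=\tfrac12\Omega_\alpha(n)$, and reindexes the Bernoulli sum via $j\mapsto m+1-j$. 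Your final $k=2$ specialization is right in spirit but loose on constants: in fact $\Psi_{(2\alpha)^2,2}=\tfrac12\Omega_\alpha$ (not $\Omega_\alpha$) and $\Res_{u=0}\Phi_\alpha=-\tfrac12\zeta^2(2m+1)\big(\gamma+\log(\alpha/\pi)-\zeta'(2m+1)/\zeta(2m+1)\big)$; these two halves and the minus sign cancel consistently once one multiplies \eqref{General Ramanujan identity} through by $2$ to recover \eqref{Dixit formula}.
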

In Theorem \ref{Main result}, the identity \eqref{General Ramanujan identity} can be analytically continued to any complex $\alpha, \beta$ with $\re(\alpha)>0$ and $\re(\beta)>0$. Thus, letting $\alpha=-\pi i z$ with any complex number $z$ in upper-half plane and $m=-m$ with $m>0$, the series in \eqref{General Ramanujan identity} namely,
\begin{equation*}
\sum_{n=1}^{\infty}d_k(n)n^{2m-1}\Psi_{(2\alpha)^k, k}(n)
\end{equation*} 
represents the generalization of usual Eisenstein series of weight $2m$ over $SL_2(\Z)$. The following corollary provides the generalization of the transformation formula \eqref{Eisenstein series} of $E_{2m}(z)$ for $m>1$ over $SL_2(\Z)$, which can be obtained directly from \eqref{General Ramanujan identity}, substituting $m$ by $-m$ .
\begin{corollary}
Let $m$ be any natural number with $m>1$. Then for any $\alpha, \beta >0$ satisfying $\alpha\beta=\pi^{2}$, we have
\begin{align*}
&(\alpha^k)^{m}\sum_{n=1}^{\infty}d_k(n)n^{2m-1}\Psi_{(2\alpha)^k, k}(n) - (-\beta^k)^{m}\sum_{n=1}^{\infty}d_k(n)n^{2m-1}\Psi_{(2\beta)^k, k}(n) \\
&= \frac{1}{(k-1)!}\frac{d^{k-1}}{ds^{k-1}}\left\{ \zeta^k(1-2m+s)\zeta^k(s)\Gamma^k(s+1)\cos^{k-1}\left(\frac{\pi s}{2}\right)2^{-ks}\left(\alpha^{-k(s-m)} - (-1)^m\beta^{-k(s-m)}\right)\right\}\bigg\rvert_{s=0}
\end{align*}
\end{corollary}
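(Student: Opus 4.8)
The plan is to deduce this statement directly from Theorem~\ref{Main result} by specialising \eqref{General Ramanujan identity} at $-m$ in place of $m$; this is legitimate because $-m$ is a nonzero integer whenever $m$ is a natural number, and the convergence of the series $\sum_{n\ge1}d_k(n)n^{2m-1}\Psi_{(2\alpha)^k,k}(n)$ is already part of the content of that theorem. Under the replacement $m\mapsto -m$ we have $2m+1\mapsto 1-2m$, $n^{-2m-1}\mapsto n^{2m-1}$, the prefactors $(\alpha^k)^{-m},(-\beta^k)^{-m}$ become $(\alpha^k)^{m},(-\beta^k)^{m}$, the factor $2^{2km}$ becomes $2^{-2km}$, and the finite sum $\sum_{j=0}^{m+1}$ becomes $\sum_{j=0}^{1-m}$.

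The key observation is then immediate: for $m>1$ there is no integer $j$ with $0\le j\le 1-m$, so the entire polynomial-in-$\alpha,\beta$ term on the right-hand side of \eqref{General Ramanujan identity} is an empty sum and vanishes. (This is precisely why the hypothesis $m>1$ is imposed; at $m=1$ the single summand $j=0$ would survive.) Writing $S_\alpha:=\sum_{n\ge1}d_k(n)n^{2m-1}\Psi_{(2\alpha)^k,k}(n)$ and $D_\alpha:=\frac{1}{(k-1)!}\frac{d^{k-1}}{ds^{k-1}}\bigl(\zeta^k(1-2m+s)\zeta^k(s)\Gamma^k(s+1)\cos^{k-1}(\tfrac{\pi s}{2})(2\alpha)^{-ks}\bigr)\big|_{s=0}$, and similarly $S_\beta,D_\beta$, the specialised identity collapses to $(\alpha^k)^m(S_\alpha-D_\alpha)=(-\beta^k)^m(S_\beta-D_\beta)$, i.e.
\begin{equation*}
(\alpha^k)^m S_\alpha-(-\beta^k)^m S_\beta=(\alpha^k)^m D_\alpha-(-\beta^k)^m D_\beta,
\end{equation*}
whose left side is exactly the left side of the asserted formula.

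It remains to recast the right side $(\alpha^k)^m D_\alpha-(-\beta^k)^m D_\beta$ as a single $(k-1)$-st derivative at $s=0$. Since $\alpha^{km}$ and $(-\beta^k)^m=(-1)^m\beta^{km}$ are constants in $s$, they pass inside $\tfrac{d^{k-1}}{ds^{k-1}}$, and one uses $\alpha^{km}(2\alpha)^{-ks}=2^{-ks}\alpha^{-k(s-m)}$, $\beta^{km}(2\beta)^{-ks}=2^{-ks}\beta^{-k(s-m)}$; because the two derivatives act on the same analytic function and differ only in the power of $\alpha$ versus $\beta$ and in the sign $(-1)^m$, combining them yields
\begin{equation*}
\frac{1}{(k-1)!}\frac{d^{k-1}}{ds^{k-1}}\left\{\zeta^k(1-2m+s)\zeta^k(s)\Gamma^k(s+1)\cos^{k-1}\!\left(\tfrac{\pi s}{2}\right)2^{-ks}\bigl(\alpha^{-k(s-m)}-(-1)^m\beta^{-k(s-m)}\bigr)\right\}\bigg|_{s=0},
\end{equation*}
which is the right side of the corollary. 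I do not anticipate a genuine obstacle here: the argument is pure specialisation together with sign-and-exponent bookkeeping. The only mild point to check — hardly an obstacle — is that the function under the derivative is holomorphic in a neighbourhood of $s=0$ (the poles of $\zeta^k(\cdot)$ at the argument $1$ occur at $s=1$ and $s=2m$, both away from the origin), so the indicated $(k-1)$-st derivative is well defined.
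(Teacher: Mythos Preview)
Your argument is correct and matches the paper's own approach, which simply states that the corollary follows from \eqref{General Ramanujan identity} by substituting $m$ by $-m$. You have supplied exactly the details behind that one-line remark: the finite Bernoulli sum becomes empty for $m>1$, and the two derivative terms combine into the single displayed expression.
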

The next result generalizes the transformation formula \eqref{Quasi modular form} of Quasi modular form $E_2(z)$ over $SL_2(\Z)$, which can be obtained by inserting $m=-1$ in \eqref{General Ramanujan identity}.
\begin{corollary} 
Let $\alpha, \beta$ be any real number with $\alpha, \beta >0$, which satisfies $\alpha\beta=\pi^{2}$. Then the following identity
\begin{align*}
&\alpha^k\sum_{n=1}^{\infty}nd_k(n)\Psi_{(2\alpha)^k, k}(n) +\beta^k\sum_{n=1}^{\infty}nd_k(n)\Psi_{(2\beta)^k, k}(n) \\
&= \frac{1}{(k-1)!}\frac{d^{k-1}}{ds^{k-1}}\left\{ \zeta^k(s-1)\zeta^k(s)\Gamma^k(s+1)\cos^{k-1}\left(\frac{\pi s}{2}\right)2^{-ks}\left(\alpha^{-k(s-1)} + \beta^{-k(s-1)}\right)\right\}\bigg\rvert_{s=0} - \left(\frac{\pi}{2} \right)^{k-1}2^{-2k}
\end{align*}
holds.
\end{corollary}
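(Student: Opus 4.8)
The plan is to derive this corollary as a pure specialization of Theorem~\ref{Main result}, taking $m=-1$ in the identity \eqref{General Ramanujan identity}. Since $-1\in\Z\setminus\{0\}$ and $\alpha,\beta>0$ with $\alpha\beta=\pi^2$ is already within the hypotheses of the theorem, no analytic continuation (in $m$, $\alpha$ or $\beta$) is needed; the proof consists simply of substituting $m=-1$ and then simplifying the three groups of terms appearing in \eqref{General Ramanujan identity} — the two $\Psi$-series, the two $(k-1)$-th derivative terms, and the finite Bernoulli double sum — and finally rearranging.

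First I would handle the series terms. At $m=-1$ the exponent $2m+1$ equals $-1$, so $d_k(n)/n^{2m+1}=n\,d_k(n)$, while the prefactors become $(\alpha^k)^{-m}=\alpha^k$ and $(-\beta^k)^{-m}=-\beta^k$. Transposing the $\Psi$-series attached to $\beta$ from the right-hand side to the left turns its coefficient from $-\beta^k$ into $+\beta^k$, producing exactly
\[
\alpha^k\sum_{n=1}^{\infty}n\,d_k(n)\,\Psi_{(2\alpha)^k,k}(n)+\beta^k\sum_{n=1}^{\infty}n\,d_k(n)\,\Psi_{(2\beta)^k,k}(n)
\]
on the left. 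Convergence of these series for $m=-1$ is already guaranteed by Theorem~\ref{Main result}, since the Meijer $G$-function in the definition \eqref{Psiomegakx} of $\Psi_{\rho,k}$ decays exponentially.

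Next come the derivative terms. At $m=-1$ one has $\zeta^k(2m+1+s)=\zeta^k(s-1)$, and the integrand
\[
\zeta^k(s-1)\,\zeta^k(s)\,\Gamma^k(s+1)\,\cos^{k-1}\!\left(\tfrac{\pi s}{2}\right)(2\alpha)^{-ks}
\]
is analytic at $s=0$ (both $\zeta^k(s-1)$ and $\zeta^k(s)$ are finite there), so its $(k-1)$-th derivative at $s=0$ makes sense. Since the prefactor $\alpha^k$ does not depend on $s$, it may be carried inside the derivative, where $\alpha^k(2\alpha)^{-ks}=2^{-ks}\alpha^{-k(s-1)}$; doing the same with the $\beta$-term (after it has been moved to the left, the two minus signs cancelling) merges the two contributions into
\[
\frac{1}{(k-1)!}\frac{d^{k-1}}{ds^{k-1}}\!\left\{\zeta^k(s-1)\zeta^k(s)\Gamma^k(s+1)\cos^{k-1}\!\left(\tfrac{\pi s}{2}\right)2^{-ks}\bigl(\alpha^{-k(s-1)}+\beta^{-k(s-1)}\bigr)\right\}\bigg\rvert_{s=0},
\]
which is the right-hand side's derivative term. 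Finally, in the Bernoulli sum the index range $0\le j\le m+1$ collapses at $m=-1$ to the single term $j=0$, where $B_{2m-2j+2}=B_{2j}=B_0=1$, $(2m-2j+2)!=(2j)!=0!=1$, and $\alpha^{k(m+1-j)}\beta^{kj}=1$, so the sum equals $1$; the prefactors reduce to $(-1)^{km+k+m}=(-1)^{-1}=-1$ and $2^{2km}=2^{-2k}$, leaving $-(\pi/2)^{k-1}2^{-2k}$. Assembling the three simplified groups gives precisely the claimed identity. The only step requiring genuine care — and it is purely bookkeeping, not analysis — is tracking the sign factors $(-\beta^k)^{-m}$ and $(-1)^{km+k+m}$ together with the degeneration of the Bernoulli sum to a single constant; all the substantive content is already supplied by Theorem~\ref{Main result}.
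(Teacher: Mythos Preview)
Your proposal is correct and follows exactly the route the paper indicates: the paper merely says the corollary ``can be obtained by inserting $m=-1$ in \eqref{General Ramanujan identity}'' and gives no further details, while you carry out that substitution explicitly and track the sign factors, the absorption of $\alpha^k,\beta^k$ into the derivative term, and the collapse of the Bernoulli sum to a single term. There is nothing to add.
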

For any complex number $z$ with $\im(z) > 0$, The Dedekind eta function $\eta(z)$ can be defined as
\begin{equation*}
\eta(z) := e^{\frac{\pi i z}{12}} \prod\limits_{n=1}^\infty \left( 1 - e^{2\pi i nz}\right).
\end{equation*}
It is a modular form of weight $\frac{1}{2}$ over the full modular group and has a Fourier series expansion $\eta(z) = \sum_{n=1}^{\infty} \sigma_{-1}(n)e^{2\pi i nz}$. Ramanujan's formula \eqref{Ramanujan formula} also provides a transformation formula of $\eta(z)$ under the transformation $z\mapsto \frac{1}{z}$ as a special case which is given by
\begin{equation*}
\sum_{n=1}^{\infty} \sigma_{-1}(n)e^{-2n\alpha}-\sum_{n=1}^{\infty} \sigma_{-1}(n)e^{-2n\beta}=\frac{\beta-\alpha}{12}+\frac{1}{4}\log\left(\frac{\alpha}{\beta}\right),
\end{equation*}
where $\alpha, \beta>0$ with $\alpha\beta=\pi^{2}$. We next generalize the above transformation formula. 
\begin{theorem}\label{Generalization of Dedekind eta}
For $\alpha, \beta >0$ satisfying $\alpha \beta=\pi^{2}$,
\begin{align*}
&\sum_{n=1}^{\infty} \frac{d_k(n)}{n}\Psi_{(2\alpha)^k,k}(n)-\sum_{n=1}^{\infty} \frac{d_k(n)}{n}\Psi_{(2\beta)^k,k}(n)\\
&=\frac{(-1)^k2^{k}}{(2k-1)!}\frac{d^{2k-1}}{ds^{2k-1}}\left( \Gamma^k(1+s)\Gamma^k(1-s)\zeta^k(s)\zeta^k(-s)\cos^{2k-1}\left(\frac{\pi s}{2}\right)\left(\frac{\alpha}{\beta}\right)^{-\frac{ks}{2}}\right)\bigg\rvert_{s=0}
\hspace{-3pt}+2\frac{(-1)^{k+1}\pi^{k-1}}{24^k}(\beta^k-\alpha^k)
\end{align*}
\end{theorem}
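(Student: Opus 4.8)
The plan is to realize $\sum_{n=1}^{\infty}\frac{d_k(n)}{n}\Psi_{(2\alpha)^k,k}(n)$ as a Mellin--Barnes integral and to shift the contour, using the functional equation of $\zeta$ to pass between $\alpha$ and $\beta$. First I would insert the Mellin--Barnes representation of the Meijer $G$-function in \eqref{Psiomegakx},
\begin{equation*}
G_{0,\ 2k}^{k+1,\ 0}\bigg(\begin{matrix}\text{---}\\ (0)_k,\tfrac12:\left(\tfrac12\right)_{k-1}\end{matrix}\ \bigg|\ z\bigg)=\frac{1}{2\pi i}\int_{(c_0)}\frac{\Gamma(-s)^k\,\Gamma\!\left(\tfrac12-s\right)}{\Gamma\!\left(\tfrac12+s\right)^{k-1}}\,z^{s}\,ds\qquad(c_0<-\tfrac12),
\end{equation*}
with $z=\rho^{2}j^{2}x^{2}/2^{2k}$; taking $\rho=(2\alpha)^{k}$, $x=n$, performing the $j$-sum against $d_k(j)$ and the $n$-sum against $d_k(n)/n$ (each producing a factor $\zeta^{k}$), and substituting $w=-2s$, one arrives, after justifying the interchange of summation and integration via the exponential decay of $\Psi_{(2\alpha)^k,k}$ and of the $\Gamma$-quotient on vertical lines (Stirling), at
\begin{equation*}
\sum_{n=1}^{\infty}\frac{d_k(n)}{n}\Psi_{(2\alpha)^k,k}(n)=\frac{\pi^{k/2-1}}{2^{k}}\cdot\frac{1}{2\pi i}\int_{(c)}\Phi(w)\,\alpha^{-kw}\,dw,\qquad \Phi(w):=\frac{\Gamma\!\left(\tfrac w2\right)^{k}\Gamma\!\left(\tfrac{1+w}{2}\right)}{\Gamma\!\left(\tfrac{1-w}{2}\right)^{k-1}}\,\zeta^{k}(w)\,\zeta^{k}(w+1),
\end{equation*}
valid for $1<c<2$. (For $k=1$ this is the elementary identity $\sum_n \frac{1}{n(e^{2\alpha n}-1)}=\frac{1}{2\pi i}\int_{(c)}\Gamma(w)\zeta(w)\zeta(w+1)(2\alpha)^{-w}\,dw$.) This is precisely the $m=0$ endpoint of Theorem~\ref{Main result}, which is omitted there because, as the pole analysis below shows, the order-$k$ pole of $\Gamma(w/2)^{k}$ at $w=0$ then collides with the order-$k$ pole of $\zeta^{k}(w+1)$.

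I would then shift the line of integration from $\re(w)=c$ to $\re(w)=-c$. For $1<c<2$ the integrand $\Phi(w)\alpha^{-kw}$ has in the strip $-c<\re(w)<c$ exactly three poles: a simple pole at $w=1$ (the order-$k$ pole of $\zeta^{k}(w)$ cut down to order $1$ by the order-$(k-1)$ zero of $1/\Gamma(\tfrac{1-w}{2})^{k-1}$), a pole of order $2k$ at $w=0$ (order $k$ from $\Gamma(w/2)^{k}$, order $k$ from $\zeta^{k}(w+1)$), and a simple pole at $w=-1$ (from $\Gamma(\tfrac{1+w}{2})$). The decay needed on the horizontal segments of the rectangle and on the line $\re(w)=-c$ comes from Stirling's formula---the three $\Gamma$-factors contribute an exponential $e^{-k\pi|t|/4}\cdot e^{-\pi|t|/4}\cdot e^{(k-1)\pi|t|/4}=e^{-\pi|t|/2}$---together with the polynomial growth of $\zeta$ in vertical strips. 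Hence
\begin{equation*}
\sum_{n=1}^{\infty}\frac{d_k(n)}{n}\Psi_{(2\alpha)^k,k}(n)=\frac{\pi^{k/2-1}}{2^{k}}\cdot\frac{1}{2\pi i}\int_{(-c)}\Phi(w)\,\alpha^{-kw}\,dw+\frac{\pi^{k/2-1}}{2^{k}}\Big(\Res_{w=1}+\Res_{w=0}+\Res_{w=-1}\Big)\!\big[\Phi(w)\alpha^{-kw}\big].
\end{equation*}

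The decisive algebraic step is that the shifted integral is exactly the $\beta$-series. Substituting $w\mapsto-w$ and applying $\zeta(w)=2^{w}\pi^{w-1}\sin(\pi w/2)\Gamma(1-w)\zeta(1-w)$ to both $\zeta(-w)$ and $\zeta(1-w)$, followed by the Legendre duplication and the reflection formulas for $\Gamma$, one obtains the clean identity $\Phi(-w)=\pi^{-2kw}\Phi(w)$; since $\alpha\beta=\pi^{2}$ this gives $\Phi(-w)\alpha^{kw}=\Phi(w)\beta^{-kw}$, so that $\frac{\pi^{k/2-1}}{2^{k}}\cdot\frac{1}{2\pi i}\int_{(-c)}\Phi(w)\alpha^{-kw}\,dw=\sum_{n=1}^{\infty}\frac{d_k(n)}{n}\Psi_{(2\beta)^k,k}(n)$. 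It then remains to compute the three residues. At $w=\pm1$ the poles are simple, and using $\zeta(2)=\pi^{2}/6$, $\zeta(0)=-\tfrac12$, $\zeta(-1)=-\tfrac1{12}$, $\Gamma(\tfrac12)=\sqrt\pi$, $\Gamma(-\tfrac12)=-2\sqrt\pi$ and $\alpha^{-k}=\beta^{k}/\pi^{2k}$, one finds (after multiplying by $\pi^{k/2-1}/2^{k}$) that the $w=1$ residue contributes $\tfrac{2(-1)^{k+1}\pi^{k-1}}{24^{k}}\beta^{k}$ and the $w=-1$ residue $-\tfrac{2(-1)^{k+1}\pi^{k-1}}{24^{k}}\alpha^{k}$, which together give the polynomial term. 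For the pole at $w=0$ of order $2k$, $\Res_{w=0}[\Phi(w)\alpha^{-kw}]=\frac{1}{(2k-1)!}\frac{d^{2k-1}}{dw^{2k-1}}\big[w^{2k}\Phi(w)\alpha^{-kw}\big]\big|_{w=0}$, and the same manipulations (functional equation on $\zeta^{k}(w+1)$, then duplication and reflection for $\Gamma$, and $(\pi/\alpha)^{w}=(\alpha/\beta)^{-w/2}$) transform $\frac{\pi^{k/2-1}}{2^{k}}w^{2k}\Phi(w)\alpha^{-kw}$ into $(-1)^{k}2^{k}\,\Gamma^{k}(1+w)\Gamma^{k}(1-w)\,\zeta^{k}(w)\zeta^{k}(-w)\cos^{2k-1}\!\big(\tfrac{\pi w}{2}\big)\big(\tfrac{\alpha}{\beta}\big)^{-kw/2}$, so this residue is exactly the derivative term in the statement. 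Collecting the three contributions yields the asserted identity.

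The work is essentially bookkeeping: the subtle points are justifying the interchange of summation and integration and the contour shift (uniform decay on expanding rectangles), and carrying out the $\Gamma$-function simplifications in both $\Phi(-w)=\pi^{-2kw}\Phi(w)$ and the order-$2k$ residue. The genuinely new feature, compared with Theorem~\ref{Main result}, is the collision of the two poles at $w=0$, which forces a $(2k-1)$-st derivative in place of a $(k-1)$-st and places this case outside the scope of \eqref{General Ramanujan identity}.
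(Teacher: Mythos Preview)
Your proposal is correct and follows essentially the same route as the paper: express the series as a Mellin--Barnes integral, shift the contour across the poles at $w=1$, $w=0$ (order $2k$), and $w=-1$, and use the functional equation of $\zeta$ to identify the shifted integral with the $\beta$-series. The only cosmetic difference is that the paper applies the functional equation at the outset to put the integrand in the manifestly symmetric form $\dfrac{\zeta^{k}(1+s)\zeta^{k}(1-s)}{\cos(\pi s/2)}\!\left(\dfrac{\rho}{(2\pi)^{k}}\right)^{-s}$, whereas you keep the $\Gamma$-quotient $\Phi(w)$ and verify the symmetry $\Phi(-w)=\pi^{-2kw}\Phi(w)$ directly; the residue computations and the final identification are the same.
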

Clearly, one can obtain the transformation formula of $\eta(z)$ by putting $k=1$ in the above identity. For $k=2$, the above theorem reduces to \cite[Theorem 2.9]{Dixit}. 

The next result can be considered as a generalization of Lerch's identity \eqref{Lerch}, which follows directly from Theorem \ref{Main result} by inserting $\alpha = \beta = \pi^k$ in \eqref{General Ramanujan identity} and considering $m$ to be odd.
\begin{theorem}
Let $m$ be any non-zero odd integer. Then for any natural number $k$,
\begin{align*}
\sum_{n=1}^{\infty}\frac{d_k(n)}{n^{2m+1}}\Psi_{(2\pi)^k, k}(n)&=\frac{1}{(k-1)!}\Bigg[\frac{d^{k-1}}{ds^{k-1}}\left( \zeta^k(2m+1+s)\zeta^k(s)\Gamma^k(s+1)\cos^{k-1}\left(\frac{\pi s}{2}\right)(2\pi)^{-ks}\right)\bigg\rvert_{s=0}\\
&+2^{2km-k}\pi^{2km+2k-1}\sum_{n=0}^{m+1}(-1)^{n+1}\frac{B^k_{2m-2n+2}B^k_{2n}}{((2m-2n+2)!(2n)!)^k}.
\end{align*}
In particular, at $k=1$, the above identity reduces to Lerch's identity \eqref{Lerch}.
\end{theorem}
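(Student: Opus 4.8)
The identity is obtained by specializing Theorem~\ref{Main result}. I would take $\alpha=\beta=\pi$ in \eqref{General Ramanujan identity} (legitimate, since then $\alpha\beta=\pi^{2}$); with this choice $\alpha^{k}=\beta^{k}=\pi^{k}$ and $(2\alpha)^{k}=(2\beta)^{k}=(2\pi)^{k}$, so the two square-bracketed expressions appearing on the left-hand side and on the right-hand side of \eqref{General Ramanujan identity} become literally the same quantity, say
\begin{align*}
\mathcal B:={}&\sum_{n=1}^{\infty}\frac{d_k(n)}{n^{2m+1}}\Psi_{(2\pi)^k,k}(n)\\
&-\frac{1}{(k-1)!}\frac{d^{k-1}}{ds^{k-1}}\!\left(\zeta^k(2m+1+s)\zeta^k(s)\Gamma^k(s+1)\cos^{k-1}\!\left(\tfrac{\pi s}{2}\right)(2\pi)^{-ks}\right)\!\Big|_{s=0}.
\end{align*}
Since $m$ is odd, $(-\beta^{k})^{-m}=(-1)^{-m}(\beta^{k})^{-m}=-\pi^{-km}$, so \eqref{General Ramanujan identity} collapses to $\pi^{-km}\mathcal B=-\pi^{-km}\mathcal B+(\text{Bernoulli term})$, i.e. $2\pi^{-km}\mathcal B$ equals the last line of \eqref{General Ramanujan identity} evaluated at $\alpha=\beta=\pi$.

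Next I would simplify that Bernoulli term. At $\alpha=\beta=\pi$ the factor $\alpha^{k(m+1-j)}\beta^{kj}$ collapses to $\pi^{k(m+1)}$ for every $j$, and the prefactor sign simplifies via $(-1)^{km+k+m}=(-1)^{m}=-1$, because $km+k=k(m+1)$ is even when $m$ is odd. Hence
\[
2\pi^{-km}\mathcal B=-\left(\tfrac{\pi}{2}\right)^{k-1}2^{2km}\pi^{k(m+1)}\sum_{j=0}^{m+1}(-1)^{j}\frac{B^{k}_{2m-2j+2}B^{k}_{2j}}{\big((2m-2j+2)!(2j)!\big)^{k}},
\]
and collecting the powers of $2$ and $\pi$ on the right gives
\[
\mathcal B=2^{2km-k}\pi^{2km+2k-1}\sum_{j=0}^{m+1}(-1)^{j+1}\frac{B^{k}_{2m-2j+2}B^{k}_{2j}}{\big((2m-2j+2)!(2j)!\big)^{k}}.
\]
Substituting back the definition of $\mathcal B$ and transposing the $s$-derivative term yields the asserted formula (with the summation index $j$ renamed $n$). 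This half of the argument is pure bookkeeping; the only point that requires care is the consistent tracking of the overall sign and of the exponents of $2$ and $\pi$, and I do not anticipate any genuine obstacle there.

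For the last assertion, that $k=1$ reduces to Lerch's identity \eqref{Lerch}, I would first identify $\Psi_{2\pi,1}$ as an elementary series. Setting $k=1$ in \eqref{Psiomegakx}, the Meijer $G$-function appearing there is $G^{2,0}_{0,2}$ with lower parameters $0,\tfrac12$ and argument $\rho^{2}j^{2}x^{2}/4$; using the standard reduction of $G^{2,0}_{0,2}$ with lower parameters $a,b$ to $2z^{(a+b)/2}K_{a-b}(2\sqrt z)$, together with $K_{1/2}(w)=\sqrt{\pi/(2w)}\,e^{-w}$, one gets $G^{2,0}_{0,2}(z)=\sqrt{\pi}\,e^{-2\sqrt z}$, so that $\Psi_{\rho,1}(x)=\sum_{j\ge1}e^{-\rho j x}=(e^{\rho x}-1)^{-1}$ and in particular $\Psi_{2\pi,1}(n)=(e^{2\pi n}-1)^{-1}$. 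Moreover, at $k=1$ the $s$-derivative term degenerates to the value at $s=0$ of $\zeta(2m+1+s)\zeta(s)\Gamma(s+1)(2\pi)^{-s}$, which equals $-\tfrac12\zeta(2m+1)$ since $\zeta(0)=-\tfrac12$ and $\Gamma(1)=1$. Plugging these into the $k=1$ case of the theorem, transposing $-\tfrac12\zeta(2m+1)$, and multiplying through by $2$ gives exactly \eqref{Lerch}, which completes the verification.
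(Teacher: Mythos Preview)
Your proposal is correct and follows exactly the approach indicated in the paper, namely specializing Theorem~\ref{Main result} at $\alpha=\beta=\pi$ with $m$ odd (the paper's text ``$\alpha=\beta=\pi^{k}$'' is a typo, since $\alpha\beta=\pi^{2}$ forces $\alpha=\beta=\pi$; you have the correct value). Your reduction of the $k=1$ case to Lerch's identity via $\Psi_{2\pi,1}(n)=(e^{2\pi n}-1)^{-1}$ and $\zeta(0)=-\tfrac12$ likewise mirrors the paper's proof of Corollary~\ref{Ramanujan theorem}.
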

The case $k=2$ of the above identity was obtained in \cite[Equation (2.4)]{Dixit}.

The paper is organized as follows. In \S \ref{Preliminaries}, we collect basic tools, which we have applied throughout. \S \ref{Generalization of Ramanujan's identity and it's special cases} provides the proof of the transformation formula of $\zeta^k(2m+1)$, and its special cases. Finally, in \S \ref{Transformation of the generalized Dedekind eta function}, we generalized the transformation formula of $\eta(z)$.

\section{Preliminaries}\label{Preliminaries}
In this section, we collect some basic tools of analytic number theory and complex analysis, which will be applied throughout. 
\subsection{Gamma function}
The Gamma function plays a significant role in this paper. For $\re(z) > 0$, it can be defined via the convergent improper integral 
\begin{equation*}
\Gamma(z) = \int_0^\infty e^{-t} t^{z-1} {\rm d}t.
\end{equation*}

The function $\Gamma(z)$ is absolutely convergent for $\mathfrak{R}(z) > 0$. It can be analytically continued to the whole complex plane except for simple poles at every non-positive integers. It also satisfies the functional equation \cite[Appendix A]{Ayoub}, namely,
\begin{equation*}
\Gamma(z+1) = z\Gamma(z).
\end{equation*}

Two important properties of Gamma function which will be applied frequently is the following :
\begin{equation}\label{Reflection formula}
\Gamma(z)\Gamma(1-z) = \frac{\pi}{\sin \pi z},
\end{equation}
where $z \notin \mathbb{Z}$ and
\begin{equation}\label{Duplication formula}
\Gamma(z)\Gamma \left(z+\frac{1}{2}\right) = 2^{1-2z} \sqrt{\pi} \Gamma(2z)
\end{equation}
for any complex $z$, which are known as {\em reflection formula} and {\em duplication formula} respectively. The proof of these properties can be found in \cite[Appendix A]{Ayoub}.

The well-known formula of Stirling for the Gamma function on a vertical strip states that for $a \leq \sigma\leq b$ and $t \geq 1$ (cf. \cite[p.~224]{cop}),
\begin{equation}\label{Stirling formula}
|\Gamma(\sigma+it)| = (2\pi)^{\frac{1}{2}} |t|^{\sigma-\frac{1}{2}} e^{-\frac{1}{2}\pi |t|}\left(1+ \mathcal{O}\left(\frac{1}{|t|} \right) \right).
\end{equation}

\subsection{Riemann zeta function}
The Riemann zeta function can be defined by the following series
\begin{equation}\label{zeta}
\zeta(s):= \sum_{n=1}^\infty \frac{1}{n^s},
\end{equation}
where $\re(s)>1$. It can be continued analytically to the whole complex plane except for the simple pole at $s=1$. The functional equation of the Riemann zeta function is given by \cite[p. 13, Equation (2.1.1)]{Titchmarsh}
\begin{equation}\label{zeta functional equation}
\zeta(s) = 2^s \pi^{s-1} \Gamma(1-s) \zeta(1-s) \sin\left(\frac{\pi s}{2} \right).
\end{equation}
Taking $k$-th exponent on both sides of \eqref{zeta}, we have for $\re(s)>1$,
\begin{equation}\label{zetak}
\sum_{n=1}^{\infty} \frac{d_k(n)}{n^s} = \zeta^k(s).
\end{equation}

\subsection{Special function}\label{sec:specialfunctions}
The mathematical functions which have more or less established names and notations due to their importance in mathematical analysis, functional analysis, geometry, physics, or other applications are known as special functions. These mainly appear as solutions of differential equations or integrals of elementary functions. 

One of the most important families  of special functions are the Bessel functions,  which are basically the canonical solution of Bessel's differential equations
\begin{equation*}
x^2\frac{d^2y}{dx^2}+x\frac{dy}{dx}+(x^2-a^2)y = 0
\end{equation*}
where $a$ is any arbitrary complex number.

Meijer in 1936 introduced a general special function namely, $G$-function (cf. \cite[p.~143]{Luke}), which includes most of the known special functions as particular cases. For any non-zero complex number $z$ and for integers $m, n, p, q$ satisfying $0\leq m <p$ and $0\leq n <q$, the Meijer $G$-function can be defined as an inverse Mellin transform of quotient of products of gamma factors as
\begin{equation}\label{Meijer G}
\begin{aligned}
G^{m, \ n}_{p, \ q}\bigg(\begin{matrix}
a_1, \ldots, a_p \\
b_1, \ldots, b_q
\end{matrix} \ \bigg|\ z\bigg)=\frac{1}{2\pi i}\underset{{(C)}}{\bigints} \frac{\prod\limits_{j=1}^m\Gamma(b_j-s)\prod\limits_{j=1}^n\Gamma(1-a_j+s)}{\prod\limits_{j=m+1}^q\Gamma(1-b_j+s)\prod\limits_{j=n+1}^p\Gamma(a_j-s)}z^s \rm{d}s,
\end{aligned}
\end{equation}
where $(C)$ in the integral denotes the vertical line from $C-i\infty$ to $C+i\infty$.

Special cases of the $G$-function include many other special functions. For instance, there are many formulae which yield relations between the $G$-function and the Bessel functions. Two important formulas among them, which we have used are given by \cite[p. 216, Equation (4)]{Bateman}, \cite[p. 675, Equation (13)]{Prudnikov}
\begin{align}
G_{0,2}^{2,0}\( \begin{array}{cl}
- \\
a, b
\end{array} \bigg | z \right) &= 2z^{\frac{1}{2}(a+b)} K_{a-b} (2z^{1/2}), \label{G and K function}\\
G_{0,4}^{3,0}\( \begin{array}{cl}
- \\
0, 0, \frac{1}{2}; \frac{1}{2}  
\end{array} \bigg | z \right) &= 4 \re\left(K_0\left(4 z^{\frac{1}{4}} e^{\frac{i\pi}{4}} \right) \right).\label{G and Ker}
\end{align}

\section{Generalization of Ramanujan's identity and it's special cases}\label{Generalization of Ramanujan's identity and it's special cases}
In this section, we prove Theorem \ref{Main result} and its special cases at $k=1$ and $2$. We begin with the following lemma.

\begin{lemma}\label{Lem:Inverse Mellin}
Let $\Psi_{\rho,k}(x)$ be defined as in \eqref{Psiomegakx}. Then for $c = \re(s) > 1$, we have
\begin{equation*}
\Psi_{\rho,k}(x) = \frac{1}{2\pi i} \int_{(c)}\Gamma^k(s) \zeta^k(s) \cos^{k-1}\left(\frac{
\pi s}{2}\right) (\rho x)^{-s} ds.
\end{equation*}
\end{lemma}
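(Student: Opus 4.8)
The goal is to recognize $\Psi_{\rho,k}(x)$ as an inverse Mellin transform. The plan is to start from the defining series \eqref{Psiomegakx}, interchange the sum over $j$ with the Mellin--Barnes contour integral defining the Meijer $G$-function, and then identify the resulting gamma-quotient as $\Gamma^k(s)\zeta^k(s)\cos^{k-1}(\pi s/2)$ up to the correct power of $\rho x$. First I would write out the contour-integral representation \eqref{Meijer G} of
\[
G_{0,\ 2k}^{k+1,\ 0}\!\left(\begin{matrix}\text{---}\\ (0)_k,\tfrac12:\left(\tfrac12\right)_{k-1}\end{matrix}\ \bigg|\ \frac{\rho^2 j^2 x^2}{2^{2k}}\right),
\]
which has $m=k+1$ bottom gammas coming from the parameters $0$ (with multiplicity $k$) and $\tfrac12$, and $q-m = k-1$ further bottom parameters all equal to $\tfrac12$ contributing reciprocal gammas $1/\Gamma(\tfrac12+s)^{k-1}$; there are no $a_j$'s, so $n=0$ and the numerator has only the $\Gamma(b_j-s)$ factors. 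Thus the integrand (before the $z^s$) is $\Gamma(s)^k\,\Gamma(\tfrac12+s)\big/\Gamma(\tfrac12+s)^{k-1} = \Gamma(s)^k\,\Gamma(\tfrac12+s)^{2-k}$, with $z = \rho^2 j^2 x^2/2^{2k}$.

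Next I would push the integration variable through a substitution $s \mapsto s/2$ so that the argument $z^{s/2}$ becomes $(\rho j x/2^k)^{s}$, picking up a Jacobian factor $\tfrac12$; the prefactor $\pi^{k/2-1}/2^{k-1}$ in \eqref{Psiomegakx} should be exactly what is needed so that, after summing $\sum_{j\ge1} d_k(j) j^{-s} = \zeta^k(s)$ (valid for $\re(s)>1$ by \eqref{zetak}, which legitimizes the interchange of sum and integral by absolute convergence on the shifted contour using Stirling \eqref{Stirling formula} to control the gamma factors), the constant collapses to $1$. The remaining task is purely a gamma-function identity: I must show
\[
\frac{\pi^{k/2-1}}{2^{k-1}}\cdot\frac12\cdot 2^{-ks}\,\Gamma\!\left(\tfrac{s}{2}\right)^{k}\Gamma\!\left(\tfrac{s+1}{2}\right)^{2-k}
= \Gamma(s)^k\cos^{k-1}\!\left(\tfrac{\pi s}{2}\right)2^{-s},
\]
after which the $\zeta^k(s)$ and $(\rho x)^{-s}$ fall into place. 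For this I would apply the duplication formula \eqref{Duplication formula} in the form $\Gamma(s) = \pi^{-1/2}2^{s-1}\Gamma(s/2)\Gamma((s+1)/2)$ to convert $\Gamma(s/2)^k\Gamma((s+1)/2)^k$ into $\Gamma(s)^k\pi^{k/2}2^{k-ks}$, leaving a leftover factor $\Gamma((s+1)/2)^{-2(k-1)}$ that must reproduce $\cos^{k-1}(\pi s/2)$; and indeed the reflection formula \eqref{Reflection formula} gives $\Gamma(\tfrac{1+s}{2})\Gamma(\tfrac{1-s}{2}) = \pi/\cos(\pi s/2)$, but I actually want $\Gamma((1+s)/2)^{-2}$, so I would instead combine duplication with reflection so as to isolate $\cos(\pi s/2)$ — concretely, use $\Gamma(s/2)\Gamma(1-s/2) = \pi/\sin(\pi s/2)$ together with the half-integer shift, or simply rewrite $\Gamma(s/2)^k \Gamma((s+1)/2)^{2-k}$ as $\big(\Gamma(s/2)\Gamma((s+1)/2)\big)^k \Gamma((s+1)/2)^{-2(k-1)}$ and then express $\Gamma((s+1)/2)^{-2}$ via reflection at $(1-s)/2$ against $\Gamma((1-s)/2)$, which a second duplication step converts back into elementary factors.

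The main obstacle I anticipate is bookkeeping rather than conceptual: matching every power of $2$, $\pi$, and the sign/normalization in the Meijer-$G$ parameter list, and confirming that the contour for the $G$-function (which must separate the poles of the $\Gamma(b_j-s)$ from those of any $\Gamma(1-a_j+s)$ — here trivial since $n=0$) is compatible with a vertical line at $c>1$ after the $s\mapsto s/2$ rescaling, so that the $\zeta^k$-series converges and Fubini applies. Once the gamma identity above is verified, the lemma follows immediately: term-by-term,
\[
\frac{1}{2\pi i}\int_{(c)} \Gamma^k(s)\zeta^k(s)\cos^{k-1}\!\left(\tfrac{\pi s}{2}\right)(\rho x)^{-s}\,ds
\]
equals the rescaled, summed contour integral, which is $\Psi_{\rho,k}(x)$. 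I would present the computation compactly, relegating the constant-chasing to a single displayed chain of equalities invoking \eqref{Reflection formula} and \eqref{Duplication formula}.
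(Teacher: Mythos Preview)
Your plan is exactly the paper's: write the Meijer $G$ as its Mellin--Barnes integral, simplify the gamma quotient via \eqref{Reflection formula} and \eqref{Duplication formula}, rescale $s$, then interchange sum and integral on $\re(s)>1$ and collapse $\sum_{j}d_k(j)j^{-s}$ to $\zeta^k(s)$ by \eqref{zetak}. One bookkeeping slip to fix: with the convention \eqref{Meijer G} the raw integrand is $\Gamma(-s)^k\Gamma(\tfrac12-s)\big/\Gamma(\tfrac12+s)^{k-1}\cdot z^{s}$, so after $s\mapsto-s$ the denominator is $\Gamma(\tfrac12-s)^{k-1}$ (not $\Gamma(\tfrac12+s)^{k-1}$), and with that correction your displayed gamma identity balances and the rest goes through verbatim.
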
 

\begin{proof}
The definition of Meijer $G$-function in \eqref{Meijer G} yields, for $\mu <0$,
\begin{align*}
G_{0,\ 2k}^{k+1,\ 0}\bigg(
\begin{matrix}
\text{---}\\
(0)_k,\frac{1}{2};\left(\frac{1}{2}\right)_{k-1}
\end{matrix} \ \bigg| \
\frac{\rho^2 j^2 x^2}{2^{2k}}
\bigg) &= \frac{1}{2\pi i} \int_{(\mu)} \frac{\Gamma^k(-s)\Gamma\left(\frac{1}{2}-s \right)}{\Gamma^{k-1}\left(\frac{1}{2}+s \right)} \left(\frac{\rho^2 j^2 x^2}{2^{2k}} \right)^s ds\\
&= \frac{1}{2\pi i} \int_{(-\mu)} \frac{\Gamma^k(s)\Gamma^k\left(\frac{1}{2}+s \right)}{\Gamma^{k-1}\left(\frac{1}{2}+s \right)\Gamma^{k-1}\left(\frac{1}{2}-s \right)} \left(\frac{\rho^2 j^2 x^2}{2^{2k}} \right)^{-s} ds,
\end{align*}
where in the last step we make the change of variable $s \mapsto -s$. Employing the reflection formula \eqref{Reflection formula} in the denominator and the duplication formula \eqref{Duplication formula} in the numerator of the above integral, we obtain
\begin{align}
G_{0,\ 2k}^{k+1,\ 0}\bigg(
\begin{matrix}
\text{---}\\
(0)_k,\frac{1}{2};\left(\frac{1}{2}\right)_{k-1}
\end{matrix} \ \bigg| \
\frac{\rho^2 j^2 x^2}{2^{2k}}
\bigg) &= \frac{2^k}{\pi^{\frac{k}{2}-1}}\frac{1}{2\pi i} \int_{(-\mu)} \Gamma^k(2s) \cos^{k-1}(\pi s) (\rho j x)^{-2s} ds \nonumber\\
&= \frac{2^{k-1}}{\pi^{\frac{k}{2}-1}}\frac{1}{2\pi i} \int_{(c)} \Gamma^k(s) \cos^{k-1}\left(\frac{\pi s}{2}\right) (\rho j x)^{-s} ds, \label{reduced Meijer G}
\end{align} 
where $c = -2\mu>0$. Invoking \eqref{reduced Meijer G} into the definition \eqref{Psiomegakx} of $\Psi_{\rho,k}(x)$, we obtain
\begin{align*}
\Psi_{\rho,k}(x) =\frac{1}{2\pi i} \sum_{j=1}^\infty d_k(j) \int_{(c)} \Gamma^k(s) \cos^{k-1}\left(\frac{\pi s}{2}\right) (\rho j x)^{-s} ds
\end{align*}
We next interchange the order of summation and integration in the above expression for $c= \re(s)>1$, which follows by applying \eqref{Reflection formula} on the cosine factor and then \eqref{Stirling formula} on Gamma factors. Therefore, for $c= \re(s)>1$, we have
\begin{align}
\Psi_{\rho,k}(x) &=\frac{1}{2\pi i} \int_{(c)} \Gamma^k(s) \left(\sum_{j=1}^\infty \frac{d_k(j)}{j^s}\right) \cos^{k-1}\left(\frac{\pi s}{2}\right) (\rho x)^{-s} ds \nonumber\\
&= \frac{1}{2\pi i} \int_{(c)}\Gamma^k(s) \zeta^k(s) \cos^{k-1}\left(\frac{
\pi s}{2}\right) (\rho x)^{-s} ds, \label{reduced Psiomegakx}
\end{align}
where in the last step we have applied \eqref{zetak}. This completes the proof of the lemma.
\end{proof}

\subsection{Proof of Theorem \ref{Main result}}
We begin with the following infinite series :
\begin{equation}\label{Lmomega}
\mathscr{L}_m(\rho) := \sum_{n=1}^{\infty}\frac{d_k(n)}{n^{2m+1}}\Psi_{\rho, k}(n),
\end{equation}
where $d_k(n)$ is the Piltz divisor function. Applying Lemma \ref{Lem:Inverse Mellin}, we have for $1<c<3$,
\begin{align*}
\mathscr{L}_m(\rho) &= \frac{1}{2\pi i} \sum_{n=1}^{\infty}\frac{d_k(n)}{n^{2m+1}} \int_{(c)}\Gamma^k(s) \zeta^k(s) \cos^{k-1}\left(\frac{\pi s}{2}\right) (\rho n)^{-s} \ ds\\
&= \frac{1}{2\pi i} \int_{(c)} \Gamma^k(s) \zeta^k(s) \left( \sum_{n=1}^{\infty}\frac{d_k(n)}{n^{2m+1+s}} \right) \cos^{k-1}\left(\frac{\pi s}{2}\right) \rho^{-s} \ ds\\
&= \frac{1}{2 \pi i}\int_{(c)}\Gamma^{k}(s) \zeta^k(s) \zeta^{k}(2m+1+s) \cos^{k-1}\left(\frac{\pi s}{2}\right) \rho^{-s} \ ds,
\end{align*}
where in the penultimate step, the interchange of the order of summation and integration is justified similarly as was done in \eqref{reduced Psiomegakx} and in the last step we have applied \eqref{zetak}. Employing functional equation \eqref{zeta functional equation} of Riemann zeta function into the above integral, we obtain
\begin{align}\label{Reduced lmomega}
\mathscr{L}_m(\rho) = \frac{1}{2^k}\frac{1}{2\pi i}\int_{(c)} F_m(s) \ ds,
\end{align}
where
\begin{equation*}
F_m(s)= \frac{\zeta^k(2m+s+1)\zeta^k(1-s)}{\cos\left(\frac{\pi s}{2}\right)} \left(\frac{\rho}{(2\pi)^k}\right)^{-s}.
\end{equation*}
We next consider the contour $\mathcal{C}$ determined by the line segments $[c-iT,c +iT], [c+iT, \lambda + iT], [\lambda+iT, \lambda-iT], [\lambda -iT, c-iT]$, where $-2m-3<\lambda<-2m-1$. It can be easily seen that the integrand $F_m(s)$ in the above integral has poles of order $k$ at $s=0$ and at $s=-2m$ due to the poles of each of the zeta functions. The integrand $F_m(s)$ also has simple poles at 
$1-2j$ for $0\leq j \leq m+1$ due to the zeros of cosine factor in the denominator. Note that the zeros of the cosine factor at $1-2j$ for $j >m+1$ do not contribute any poles since it get cancelled with the trivial zeros of $\zeta(2m+1+s)$. 

Letting $R_a$ be the corresponding residue of $F_m(s)$ at $s=a$, the Cauchy residue theorem yields
\begin{align}
\frac{1}{2\pi i}\int_{\mathcal{C}} F_m(s) \ ds &= \frac{1}{2\pi i} \left[\int_{c-iT}^{c+iT}+\int_{c+iT}^{\lambda+iT}+\int_{\lambda+iT}^{\lambda-iT}+\int_{\lambda-iT}^{c-iT} \right] F_m(s) \ ds \nonumber\\
&= R_0 + R_{-2m}+ \sum_{j=0}^{m+1} R_{1-2j} \label{Cauchy residue theorem}
\end{align}
We next evaluate the residues on the right hand side of \eqref{Cauchy residue theorem}. The residue at $s=0$ is given by
\begin{align}
R_{0}&=\frac{1}{(k-1)!}\lim_{s\rightarrow 0}\frac{d^{k-1}}{ds^{k-1}}\left(\frac{s^k \zeta^k(2m+1+s)\zeta^k(1-s)}{\cos\left(\frac{\pi s}{2}\right)}\left(\frac{\rho}{(2\pi)^k}\right)^{-s}\right)\nonumber\\
&= \frac{2^k}{(k-1)!}\lim_{s\rightarrow 0}\frac{d^{k-1}}{ds^{k-1}}\left( s^k \zeta^k(2m+1+s)\zeta^k(s)\Gamma^k(s)\cos^{k-1}\left(\frac{\pi s}{2}\right)\rho^{-s}\right)\nonumber\\
&= \frac{2^k}{(k-1)!}\frac{d^{k-1}}{ds^{k-1}}\left( \zeta^k(2m+1+s)\zeta^k(s)\Gamma^k(s+1)\cos^{k-1}\left(\frac{\pi s}{2}\right)\rho^{-s}\right)\bigg\rvert_{s=0},\label{R0}
\end{align}
where in the penultimate step we used the functional equation \eqref{zeta functional equation} of the Riemann zeta function. The residue at $s=-2m$ evaluates as 
\begin{align}
R_{-2m}&= \frac{1}{(k-1)!}\lim_{s\rightarrow -2m}\frac{d^{k-1}}{ds^{k-1}}\left(\frac{(s+2m)^k \zeta^k(2m+1+s)\zeta^k(1-s)}{\cos\left(\frac{\pi s}{2}\right)}.\left(\frac{\rho}{(2\pi)^k}\right)^{-s}\right)\nonumber\\
&= \frac{(-1)^{m+1}}{(k-1)!} \left(\frac{\rho}{(2\pi)^k} \right)^{2m}\lim_{s\rightarrow 0}\frac{d^{k-1}}{ds^{k-1}}\left(\frac{s^k \zeta^k(2m+1+s)\zeta^k(1-s)}{\cos\left(\frac{\pi s}{2}\right)}\left(\frac{\rho}{(2\pi)^k}\right)^{s}\right)\nonumber\\
&= \frac{(-1)^{m+1}2^k}{(k-1)!} \left(\frac{\rho}{(2\pi)^k} \right)^{2m}\hspace{-3pt}\frac{d^{k-1}}{ds^{k-1}}\left(\zeta^k(2m+1+s)\zeta^k(s)\Gamma^k(s+1)\cos^{k-1}\left(\frac{\pi s}{2}\right)\left(\frac{\rho}{(4\pi^2)^k}\right)^{s}\right)\bigg\rvert_{s=0},\label{R-2m}
\end{align}
where in the penultimate step we made the change of variable $s \mapsto -s-2m$ and in the last step we have used \eqref{zeta functional equation}. The residue at $s=1-2j$ with $0\leq j \leq m+1$, can be calculated similarly as the previous residue calculation, which is given by
\begin{align}
R_{1-2j} &= \lim_{s\rightarrow 1-2j}\left(\frac{(s-1+2j)\zeta^{k}(2m+1+s)\zeta^k(1-s)}{\cos\left(\frac{\pi s}{2}\right)}.\left(\frac{\rho}{(2\pi)^k}\right)^{-s}\right)\nonumber\\
&= (-1)^{j+1}\frac{2}{\pi}\zeta^{k}(2m+2-2j)\zeta^k(2j)\left(\frac{\rho}{(2\pi)^k}\right)^{(2j-1)}\nonumber\\
&=(-1)^{km+k+j+1} 2^{2km+1}\pi^{2km+2k-1}\frac{B^k_{2m-2j+2}B^k_{2j}}{(2m-2j+2)!^k(2j)!^k}\left(\frac{\rho}{(2\pi)^k}\right)^{(2j-1)},\label{R1-2j}
\end{align}
where in the last step we applied \eqref{Euler's formula} on the zeta factors. 

It can be easily shown that the horizontal integrals in \eqref{Cauchy residue theorem} vanish as $T\to \infty$ by applying  \eqref{Stirling formula} on the Gamma factors and an elementary bound on zeta factors in $F_m(s)$. Therefore, letting $T\to \infty$, \eqref{Reduced lmomega} and  \eqref{Cauchy residue theorem} together imply
\begin{align}
\mathscr{L}_m(\rho)=\frac{1}{2^k}\left[R_0+R_{-2m}+\sum_{j=0}^{m+1}R_{1-2j}+\frac{1}{2\pi i}\int_{(\lambda)}F_m(s) \ ds\right]. \label{Simplied Cauchy residue}
\end{align}
We next handle the vertical integral on the right-hand side of the above equation. Substituting $s$ by $-2m-s$, the integral reduces to
\begin{align}
\frac{1}{2\pi i}\int_{(\lambda)}F_m(s) \ ds &= (-1)^m
\left(\frac{\rho}{(2\pi)^k}\right)^{2m}\frac{1}{2\pi i}\int_{(c^{'})}\frac{\zeta^k(1-s)\zeta^k(2m+1+s)}{\cos\left(\frac{\pi s}{2}\right)}\left(\frac{\rho}{(2\pi)^k}\right)^{s} \ ds \nonumber\\
&= (-1)^m \left(\frac{\rho}{(2\pi)^k}\right)^{2m}\frac{1}{2\pi i}\int_{(c^{'})}\frac{\zeta^k(1-s)\zeta^k(2m+1+s)}{\cos\left(\frac{\pi s}{2}\right)}\left(\frac{\frac{(4\pi^2)^k}{\rho}}{(2\pi)^k}\right)^{-s} \ ds \nonumber\\
&= (-1)^m 2^k \left(\frac{\rho}{(2\pi)^k}\right)^{2m}\mathscr{L}_m\left(\frac{(4\pi^2)^k}{\rho}\right), \label{Vertical integral}
\end{align}
where $c' = -2m-\lambda$ satisfying $1<c'<3$. Inserting \eqref{R0}, \eqref{R-2m}, \eqref{R1-2j} and \eqref{Vertical integral} together into \eqref{Simplied Cauchy residue}, we obtain
\begin{align*}
\mathscr{L}_m(\rho)&-\frac{1}{(k-1)!}\frac{d^{k-1}}{ds^{k-1}}\left( \zeta^k(2m+1+s)\zeta^k(s)\Gamma^k(s+1)\cos^{k-1}\left(\frac{\pi s}{2}\right)\rho^{-s}\right)\bigg\rvert_{s=0} =(-1)^m \left(\frac{\rho}{(2\pi)^k}\right)^{2m}\\
&\times \left[\mathscr{L}_m\left(\frac{(4\pi^2)^k}{\rho}\right) -\frac{1} {(k-1)!} \frac{d^{k-1}}{ds^{k-1}}\left(\zeta^{k}(2m+1+s)\zeta^k(s)\Gamma^{k}(s+1)\cos^{k-1}\left(\frac{\pi s}{2}\right)\left(\frac{\rho}{(4\pi^2)^k}\right)^{s}\right)\bigg\rvert_{s=0}\right]\\
&+(-1)^{km+k+1}2^{2km-k+1}\pi^{2km+2k-1}\sum_{j=0}^{m+1}(-1)^{j}\frac{B^k_{2m-2j+2}B^k_{2j}}{(2m-2j+2)!^k(2j)!^k}\left(\frac{\rho}{(2\pi)^k}\right)^{2j-1}
\end{align*}
We next make the change of variable $j \mapsto m+1-j$ into the finite sum on the right-hand side of the above equation to reduce the above equation as
\begin{align*}
\mathscr{L}_m(\rho)&-\frac{1}{(k-1)!}\frac{d^{k-1}}{ds^{k-1}}\left( \zeta^k(2m+1+s)\zeta^k(s)\Gamma^k(s+1)\cos^{k-1}\left(\frac{\pi s}{2}\right)\rho^{-s}\right)\bigg\rvert_{s=0} =(-1)^m \left(\frac{\rho}{(2\pi)^k}\right)^{2m}\\
&\times \left[\mathscr{L}_m\left(\frac{(4\pi^2)^k}{\rho}\right) -\frac{1} {(k-1)!} \frac{d^{k-1}}{ds^{k-1}}\left(\zeta^{k}(2m+1+s)\zeta^k(s)\Gamma^{k}(s+1)\cos^{k-1}\left(\frac{\pi s}{2}\right)\left(\frac{\rho}{(4\pi^2)^k}\right)^{s}\right)\bigg\rvert_{s=0}\right]\\
&+(-1)^{km+k+m}2^{1-2k}\pi^{k-1}\sum_{j=0}^{m+1}(-1)^{j}\frac{B^k_{2m-2j+2}B^k_{2j}}{(2m-2j+2)!^k(2j)!^k} \ (2\pi)^{2kj} \ \rho^{2m-2j+1}
\end{align*}
Letting $\alpha^k=\frac{\rho}{2^k} $, $\beta^k=\frac{2^k \pi^{2k}}{\rho}$ and multiplying both sides by $\alpha^{-km}$ in the above equation, we arrive at
\begin{align*}
&(\alpha^k)^{-m}\left[\sum_{n=1}^{\infty}\frac{d_k(n)}{n^{2m+1}}\Psi_{(2\alpha)^k, k}(n)-\frac{1}{(k-1)!}\frac{d^{k-1}}{ds^{k-1}}\left( \zeta^k(2m+1+s)\zeta^k(s)\Gamma^k(s+1)\cos^{k-1}\left(\frac{\pi s}{2}\right)(2\alpha)^{-ks}\right)\bigg\rvert_{s=0}\right]\\
&=(-\beta^k)^{-m}\left[
\sum_{n=1}^{\infty}\frac{d_k(n)}{n^{2m+1}}\Psi_{(2\beta)^k, k}(n)-\frac{1} {(k-1)!}\frac{d^{k-1}}{ds^{k-1}}\left(\zeta^k(2m+1+s)\zeta^{k}(s)\Gamma^{k}(s+1)\cos^{k-1}\left(\frac{\pi s}{2}\right)(2\beta )^{-ks}\right)\bigg\rvert_{s=0}\right]\\
&+(-1)^{km+k+m}\left(\frac{\pi}{2}\right)^{k-1} 2^{2km}\sum_{j=0}^{m+1}(-1)^{j}\frac{B^k_{2m-2j+2}B^k_{2j}}{(2m-2j+2)!^k(2j)!^k} \alpha^{k(m+1-j)}\beta^{kj},
\end{align*}
where we applied the definition \eqref{Lmomega} of $\mathscr{L}_m(\rho)$. This completes the proof of our theorem. \qed

We next show that our formula provides Ramanujan's identity at $k=1$.
\subsection{Proof of Corollary \ref{Ramanujan theorem}}
It follows from the definition \eqref{Psiomegakx} that at $k=1$, we have
\begin{align*}
\Psi_{\rho, 1}(x) &=\frac{1}{\sqrt{\pi}} \sum_{j=1}^{\infty} 
G_{0,\ 2}^{2,\ 0}\bigg(
\begin{matrix}
\text{---}\\
0,\frac{1}{2}
\end{matrix} \ \bigg| \
\frac{\rho^2 j^2 x^2}{2^{2}}
\bigg)\\
&= \sqrt{\frac{2}{\pi}} \sum_{j=1}^{\infty}  (\rho j x)^{1/2}K_{1/2}(\rho j x),
\end{align*}
where in the last step we used \eqref{G and K function}. Now applying \cite[p.~254, Equation (10.39.2)]{NIST}, we obtain
\begin{equation*}
\Psi_{\rho, 1}(x) = \sum_{j=1}^{\infty} e^{-\rho j x}
\end{equation*}
Therefore invoking the above relation into \eqref{General Ramanujan identity}, we conclude our corollary.
\qed

We next prove that the identity \eqref{Dixit formula} can be obtained as a special case of Theorem \ref{Main result} for $k=2$.
\subsection{Proof of Corollary \ref{Dixit theorem}}
Employing \eqref{G and Ker} in the definition \eqref{Psiomegakx} at $k=2$, we obtain
\begin{align*}
\Psi_{\rho,2}(x)&=\frac{1}{2}\sum_{j=1}^{\infty}d(j)
G_{0,4}^{3,0}\left(
\begin{array}{c}
---\\
0, 0,\frac{1}{2};\frac{1}{2}
\end{array}\middle\vert
\frac{\rho^2j^2x^2}{16}
\right)\\
&=2 \sum_{j=1}^{\infty}d(j)\ \re\left(K_0\left(2\epsilon\sqrt{\rho jx}\right)\right)\\
&= \sum_{j=1}^{\infty}d(j) \left(K_0\left(2\epsilon\sqrt{\rho jx}\right) + K_0\left(2\overline{\epsilon}\sqrt{\rho jx}\right)\right),
\end{align*}
where $\epsilon = e^{\frac{i\pi}{4}}$. We next insert the above relation into \eqref{General Ramanujan identity} and make the change of variable $j \mapsto m+1-j$ in the finite sum on the right-hand side of \eqref{General Ramanujan identity} to conclude our corollary.

\section{Transformation of the generalized Dedekind eta function}\label{Transformation of the generalized Dedekind eta function}
In this section we mainly prove Theorem \ref{Generalization of Dedekind eta}.
\subsection{Proof of Theorem \ref{Generalization of Dedekind eta}}
Let 
\begin{equation}\label{Lomega}
\mathscr{L}(\rho) := \sum_{n=1}^{\infty}\frac{d_k(n)}{n}\Psi_{\rho, k}(n),
\end{equation}
where $d_k(n)$ is the Piltz divisor function. We next handle the series $\mathscr{L}(\rho)$ similarly as $\mathscr{L}_m(\rho)$, which was derived in the proof of Theorem \ref{Main result}. Proceeding similarly as in \eqref{Reduced lmomega}, the series $\mathscr{L}(\rho)$ transforms into
\begin{align*}
\mathscr{L}(\rho) = \frac{1}{2^k}\frac{1}{2\pi i}\int_{(c)} \frac{\zeta^k(1+s)\zeta^k(1-s)}{\cos\left(\frac{\pi s}{2}\right)} \left(\frac{\rho}{(2\pi)^k}\right)^{-s} \ ds,
\end{align*}
where $1<c<3$. We next shift the line of integration to $-3<\lambda<-1$ by constructing a rectangular contour. Thus the integrand has a pole of order $2k$ at $s=0$ due to the poles of zeta functions in the numerator as well as simple poles at $s=-1$ and $s=1$ due to the zeros of the cosine factor in the denominator.

Letting $R_a$ be the corresponding residue at $s=a$, the residues at the above poles can be evaluated as
\begin{align*}
R_0=(-1)^k\frac{2^{2k}}{(2k-1)!}&\frac{d^{2k-1}}{ds^{2k-1}} \left( \Gamma^k(1+s)\Gamma^k(1-s)\zeta^k(s)\zeta^k(-s)\cos^{2k-1}\left(\frac{\pi s}{2}\right)\left(\frac{\rho}{(2\pi)^k}\right)^{-s}\right)\bigg\rvert_{s=0},\\
R_{1}&=2\frac{(-1)^{k+1}  \pi^{3k-1}}{6^k \rho} \hspace{.5cm} \text{and} \hspace{.5cm}
R_{-1}=2\frac{(-1)^k \pi^{k-1}\rho}{(24)^k}
\end{align*}

Thus applying Cauchy residue theorem and simplifying further as in the proof of Theorem \ref{Main result}, we arrive at
\begin{align*}
\mathscr{L}(\rho) - \mathscr{L}\left(\frac{(4\pi^{2})^k}{\rho}\right)&= \frac{(-1)^k 2^{k}}{(2k-1)!}\frac{d^{2k-1}}{ds^{2k-1}} \left( \Gamma^k(1+s)\Gamma^k(1-s)\zeta^k(s)\zeta^k(-s)\cos^{2k-1}\left(\frac{\pi s}{2}\right)\left(\frac{\rho}{(2\pi)^k}\right)^{-s}\right)\bigg\rvert_{s=0}\\
&+ 2\frac{(-1)^{k+1}\pi^{k-1}}{24^k}\left(\frac{2^k\pi^{2k}}{\rho} - \frac{\rho}{2^k}\right).
\end{align*}
Finally letting $\alpha^k=\frac{\rho}{2^k} $, $\beta^k=\frac{2^k \pi^{2k}}{\rho}$ and applying the definition of $\mathscr{L}(\rho)$ in \eqref{Lomega}, we can conclude our theorem.

\subsection*{Acknowledgements} The authors show their sincere gratitude to Prof. Atul Dixit for fruitful suggestions and discussions. The first author is a National Postdoctoral Fellow (NPDF) at IIT Gandhinagar funded by the grant PDF/2021/001224 and the second author is a M.Sc student at IIT Gandhinagar. Both the authors would like to thank IIT Gandhinagar for the support during this project.


\begin{thebibliography}{99}
\bibitem{Apery1} R. Ap{\'e}ry, {\em Irrationalit{\'e} de $\zeta(2)$ et $\zeta(3)$}, Ast{\'e}risque {\bf 61} (1979), 11--13.

\bibitem{Apery2}  R. Ap{\'e}ry, {\em Interpolation de fractions continues et irrationalit{\'e} de certaines constantes}, in: Bull.
Section des Sci., Tome III, Biblioth{\'e}que Nationale, Paris, 1981, 37--63.

\bibitem{Ayoub} R. G. Ayoub, {\em An introduction to the analytic theory of numbers}, American Math. Soc., (1963).

\bibitem{Bateman} H. Bateman, A. Erd\'elyi, {\em Higher Transcendental Functions}, {\bf Vol. I}, New York: McGraw-Hill, (1953).

\bibitem{Berdnt0} B.C. Berndt, {\em Modular transformations and generalizations of several formulae of Ramanujan}, Rocky
Mountain J. Math. {\bf 7} (1977) 147--189.

\bibitem{Berdnt}  B.C. Berndt, A. Straub, {\em Ramanujan’s formula for $\zeta(2n + 1)$}, in: H. Montgomery, A. Nikeghbali, M. Rassias (Eds.), Exploring the Riemann Zeta Function, Springer, Cham (2017), 13--34.

\bibitem{cop} E.~T.~Copson, \emph{Theory of Functions of a Complex Variable}, Oxford University Press, Oxford, 1935.

\bibitem{Dixit} A. Dixit and R. Gupta, {\em On squares of odd zeta values and analogues of Eisenstein series}, Advances in Applied Mathematics {\bf 110} (2019) 86--119.

\bibitem{Ferrar} W.L. Ferrar, {\em Some solutions of the equation $F(t) = F(t^{-1})$}, J. Lond. Math. Soc. {\bf 11} (1936) 99--103.

\bibitem{Grosswald1} E. Grosswald, {\em Die Werte der Riemannschen Zetafunktion an ungeraden Argumentstellen}, Nachr. Akad. Wiss. Göttingen Math.-Phys. Kl. II (1970) 9--13.

\bibitem{Grosswald2} E. Grosswald, {\em Comments on some formulae of Ramanujan}, Acta Arith. {\bf 21} (1972) 25--34.

\bibitem{Kirschenhofer} P. Kirschenhofer, H. Prodinger, {\em On some applications of formulae of Ramanujan in the analysis of
algorithms}, Mathematika {\bf 38 (1)} (1991) 14--33.

\bibitem{Koshliakov} N.S. Koshliakov, {\em On an extension of some formulae of Ramanujan}, Proc. Lond. Math. Soc. {\bf 2 (1)}
(1936) 26--32.

\bibitem{Lerch} M. Lerch, {\em Sur la fonction $\zeta(s)$ pour valeurs impaires de l'argument}, J. Sci. Math. Astron. pub. pelo
Dr. F. Gomes Teixeira, Coimbra {\bf 14} (1901) 65--69.

\bibitem{Malurkar} S.L. Malurkar, {\em On the application of Herr Mellin’s integrals to some series}, J. Indian Math. Soc. 16
(1925/1926) 130--138.

\bibitem{Oberhettinger} F. Oberhettinger, K. Soni, On some relations which are equivalent to functional equations involving
the Riemann zeta function, Math. Z. {\bf 127} (1972) 17--34.

\bibitem{Luke} Y. Luke, {\em The Special Functions and Their Approximations}, Vol. 1, UK Edition, Academic Press, INC. 1969.

\bibitem{NIST} F. W. J. Olver, D. W. Lozier, R. F. Boisvert and C. W. Clark, eds., {\em NIST Handbook of Mathematical
Functions}, Cambridge University Press, Cambridge, 2010.

\bibitem{Prudnikov} A. P. Prudnikov, Yu. A. Brychkov and O. I. Marichev, {\em Integrals and series}, Vol. 3: More Special Functions, Gordon and Breach, New York, 1990.

\bibitem{Ramanujan} S. Ramanujan, Notebooks (2 Volumes), Tata Institute of Fundamental Research, Bombay, 1957,
second ed., 2012.

\bibitem{Sullivan} C. O'Sullivan, {\em Formulas for non-holomorphic Eisenstein series and for the Riemann zeta function at odd integers}, Res. Number Theory {\bf 4 (3)} (2018) 36.

\bibitem{Titchmarsh} E.~Titchmarsh, The Theory of the Riemann Zeta Function, Clarendon Press, Oxford, 1986.

\bibitem{Voronoi} G.F. Vorono{\"i}, Sur une fonction transcendante et ses applications {\`a} la sommation de quelques s{\'e}ries, Ann. {\'E}c. Norm. Sup{\'e}r. (3) {\bf 21} (1904) 207--267, 459--533.

\bibitem{Watson} G.N. Watson, {\em A Treatise on the Theory of Bessel Functions}, second ed., Cambridge University Press, London, 1944.

\bibitem{Zudilin} V. Zudilin, {\em One of the numbers $\zeta(5)$, $\zeta(7)$, $\zeta(9)$, $\zeta(11)$ is irrational} (Russian) Uspekhi Mat. Nauk {\bf 56} (2001), no. 4 (340), 149--150; translation in Russian Math. Surveys {\bf 56} (2001), no. 4, 774--776.
\end{thebibliography}
\end{document}